\documentclass{elsarticle}
\bibliographystyle{model1b-num-names}
\usepackage{amsfonts}
\usepackage{amsmath}
\usepackage[mathscr]{eucal}
\usepackage{amssymb}
\usepackage{latexsym}
\usepackage{amsthm}
\usepackage{amscd}
\usepackage{epsf}
\usepackage{graphicx}
\usepackage{makeidx}
\makeindex
\newtheorem{theorem}{Theorem}[section]
\newtheorem{proposition}[theorem]{Proposition}
\newtheorem{corollary}[theorem]{Corollary}
\newtheorem{lemma}[theorem]{Lemma}

\newtheorem{definition}[theorem]{Definition}


\newtheorem{remark}[theorem]{Remark}

\newcommand{\bsa}{\boldsymbol{a}}

\newcommand{\bsk}{\boldsymbol{k}}
\newcommand{\bsl}{\boldsymbol{l}}

\newcommand{\bst}{\boldsymbol{t}}
\newcommand{\bsu}{\boldsymbol{u}}

\newcommand{\bsx}{\boldsymbol{x}}

\newcommand{\bsalpha}{\boldsymbol{\alpha}}

\newcommand{\Acal}{\mathcal{A}}

\newcommand{\dbsx}{\, d\bsx}

\newcommand{\nat}{\mathbb{N}}
\newcommand{\natzero}{\mathbb{N}_0}

\newcommand{\midmid}{\;\middle|\;}

\def\bN{\mathbb{N}}
\def\bZ{{\mathbb Z}}

\def\bR{{\mathbb R}}
\def\bC{{\mathbb C}}

\def\F2{{\mathbb F}_2}

\newcommand{\vol}{\mathrm{vol}}

\newcommand{\tr}{\mathrm{tr}}
\newcommand{\wal}{\mathrm{wal}}

\newcommand{\Zb}{\mathbb{Z}_b}

\newcommand{\logep}{\log {\varepsilon^{-1}}}
\newcommand{\logeppow}[1]{(\log {\varepsilon^{-1})^{#1}}}

\newcommand{\Dwt}[2][\bsa]{\mu(#1; #2)}
\newcommand{\mDwt}[2][\bsa]{\tilde{\mu}(#1; #2)}

\newcommand{\worsterr}{e^{\mathrm{wor}}}
\newcommand{\Ninfo}{n(\varepsilon, s)}
\newcommand{\NinfoS}[1][\Hspace]{n(\varepsilon, s, \Hspace)}
\newcommand{\minerr}[1][n]{e(#1,s)}
\newcommand{\minerrS}[1][\Hspace]{e(n,s,#1)}
\newcommand{\minerrNS}[1][\Wspace]{e(N,s,#1)}

\newcommand{\Sspace}[1][\bsu]{\mathcal{F}_{s, #1}}
\newcommand{\Snorm}[2][\bsu]{\| #2 \|_{\Sspace[#1]}}
\newcommand{\Wspace}[1][\bsa]{\mathcal{W}_{s, #1, b}}

\newcommand{\Wnorm}[2][\bsa]{\| #2 \|_{\Wspace[#1]}}
\newcommand{\mWspace}[1][\bsa]{\widetilde{\mathcal{W}}_{s, #1, b}}

\newcommand{\mWnorm}[2][\bsa]{\| #2 \|_{\mWspace[#1]}}
\newcommand{\Hspace}{\mathcal{H}}
\newcommand{\Hnorm}[1]{\|#1\|_{\Hspace}}
\newcommand{\mvol}[2][\bsa]{\mathrm{vol}_{s,#1}(#2)}
\newcommand{\NegSet}[1]{\mathcal{N}_{#1}}
\newcommand{\SumSet}{N_s} 
\newcommand{\SumSettrac}{N}
\newcommand{\Cconv}{C_s} %
\newcommand{\Cconvhelp}{C'_s}
\newcommand{\Ctracvol}{c_1} %
\newcommand{\Ctracbd}{c_2}
\newcommand{\Ctrachelp}{c_3}
\newcommand{\mb}{m_b}
\newcommand{\Mb}{M_b}
\newcommand{\CWalcoeff}{D_b} %

\begin{document}
\begin{frontmatter}
\title{Super-polynomial convergence and tractability of
multivariate integration for infinitely times differentiable functions}
\author{Kosuke Suzuki \fnref{fn1}}
\ead{kosuke.suzuki1@unsw.edu.au}
\fntext[fn1]{Present address: School of Mathematics and Statistics, The University of New South Wales, Sydney, NSW 2052, Australia}
\address{Graduate School of Mathematical Sciences, The University of Tokyo, 3-8-1 Komaba, Meguro-ku, Tokyo 153-8914 Japan}
\begin{abstract}
We investigate multivariate integration for a space of infinitely times differentiable functions
$
\Sspace := \{f \in C^\infty [0,1]^s \mid \Snorm{f} < \infty \},
$
where
$\Snorm{f} := \sup_{\bsalpha = (\alpha_1, \dots, \alpha_s) \in \natzero^s}
\|f^{(\bsalpha)}\|_{L^1}/\prod_{j=1}^s u_j^{\alpha_j}$,
$f^{(\bsalpha)} := \frac{\partial^{|\bsalpha|}}{\partial x_1^{\alpha_1} \cdots \partial x_s^{\alpha_s}}f$
and
$\bsu = \{u_j\}_{j \geq 1}$ is a sequence of positive decreasing weights.
Let $\minerr$ be the minimal worst-case error of all algorithms
that use $n$ function values in the $s$-variate case.
We prove that for any $\bsu$ and $s$ considered
$\minerr \leq C(s) \exp(-c(s)(\log{n})^2)$ holds for all $n$,
where $C(s)$ and $c(s)$ are constants which may depend on $s$.
Further we show that 
if the weights $\bsu$ decay sufficiently fast
then there exist some $1 < p < 2$ and absolute constants $C$ and $c$ such that
$\minerr \leq C \exp(-c(\log{n})^p)$ holds for all $s$ and $n$.
These bounds are attained by quasi-Monte Carlo integration using digital nets.
These convergence and tractability results come from those for the Walsh space
into which $\Sspace$ is embedded.
\end{abstract}

\begin{keyword}
numerical integration, tractability, super-polynomial convergence,
quasi-Monte Carlo, Walsh spaces, digital nets
\end{keyword}
\end{frontmatter}

\section{Introduction}
In this paper we approximate the integral on an $s$-dimensional unit cube
\[
\int_{[0,1)^s} f(\bsx) \dbsx
\]
by a quasi-Monte Carlo (QMC) algorithm which uses $n$ function values of the form
\[
A_{n,s}(f) := \sum_{i=1}^n \frac{1}{n} f(\bst_i) \qquad \text{for $\bst_i \in [0,1)^s$}.
\]
One classical issue is the optimal rate of convergence with respect to $n$.
Another important issue is the dependence on the number of variables $s$,
since $s$ can be hundreds or more in computational applications.
The latter issue is related to the notion of tractability
if we require no exponential dependence on $s$.


A large number of studies have been devoted to numerical integration on the unit cube
for various function spaces.
One typical case is that functions are only finitely many times differentiable,
e.g., functions with bounded variation,
periodic functions in the Korobov space
and non-periodic functions in the Sobolev space,
see \cite{Niederreiter1992rng, Sloan1994lmm, Novak2010tmp, Dick2010dna} and the references therein.
For these cases, it is known that the rate of convergence is $O(n^{-\alpha})$
for some $\alpha > 0$ and thus we have polynomial convergence.
Another interesting case is when the functions are smooth, i.e., infinitely times differentiable.
Dick \cite{Dick2006tsm} gave reproducing kernel Hilbert spaces based on Taylor series
for which higher order QMC rules achieve a convergence of $O(n^{-\alpha})$ with $\alpha > 0$ arbitrarily large.
The spaces were later generalized in \cite{Zwicknagl2013iaa}.
Further results were proved in \cite{Dick2011eca, Kritzer2014mii}, 
where it is shown that exponential convergence holds
for the Korobov space of periodic functions
whose Fourier coefficients decay exponentially fast.
Exponential convergence means that the integration error converges as $O(q^{n^p})$
for some $q \in (0,1)$, $p>0$.
Note that exponential convergence was also shown for Hermite spaces on $\bR^s$ with exponentially fast decaying Hermite coefficients \cite{Irrgeher2015iih}.

In this paper we focus on a weighted normed space
of non-periodic smooth functions
\begin{equation}\label{eq:Sspace}
\Sspace := \left\{f \in C^\infty [0,1]^s \midmid \Snorm{f} := \sup_{\bsalpha = (\alpha_1, \dots, \alpha_s) \in \natzero^s} \frac{\|f^{(\bsalpha)}\|_{L^1}}{\prod_{j=1}^s u_j^{\alpha_j}} < \infty \right\}
\end{equation}
with a sequence of positive weights $\bsu = \{u_j\}_{j \geq 1}$,
where 
$f^{(\bsalpha)}
:= \frac{\partial^{|\bsalpha|}}{\partial x_1^{\alpha_1} \cdots \partial x_s^{\alpha_s}}f$.
It is easy to check that all functions in $\Sspace$ are analytic from Taylor's theorem.
For $s=1$, it is known
that the integration error using Gaussian quadrature with $n$ points converges factorially,
see, for instance, \cite[(6.53)]{Brass2011qt}.
Our interest is the multivariate QMC rules using so-called digital nets.
This is motivated by the results by Yoshiki \cite{Y}
and is closely related to the notion of
Walsh figure of merit (WAFOM)
\cite{Matsumoto2014acf, SuzukiMW, Y} first introduced by Matsumoto, Saito and Matoba.
WAFOM is a criterion for numerical integration
using digital nets and is computable in a reasonable time.
Hence we can search for good digital nets with respect to WAFOM by computer,
see \cite{Matsumoto2014acf, HOX, Harase2015qmc} for numerical experiments.
We observe that generalized WAFOM works well for the space $\Sspace$,
see Remark~\ref{rem:WAFOM}. 

The first purpose of this paper is to show that
the integration error of QMC rules using digital nets for $\Sspace$ can achieve super-polynomial convergence as $O(\exp({-c(\log{n})^2})) \asymp O(n^{-c\log{n}})$
for all $s$ and $\bsu$ considered.
Here the hidden constant and $c$ may depend on $s$.
We remark that this convergence behavior was first observed in \cite{Matsumoto2013eoh}
as the decay of the lowest-WAFOM value
and that the combination of \cite{Matsumoto2013eoh} and \cite{Y}
implies the convergence result for $\Sspace[(1/2)_{j \geq 1}]$.

We also consider tractability for $\Sspace$.
Let us briefly recall the notion of tractability
(see \cite{Novak2008tmp,Novak2010tmp, Novak2012tmp} for more information).
Let $\Ninfo$ be the information complexity, i.e.,
the minimal number $n$ of function values which are required in order to approximate
the $s$-variate integration within $\varepsilon$.
An integration problem is said to be
tractable if $\Ninfo$ does not grow exponentially in $\varepsilon$ nor $s$.
In particular, two notions of tractability have been mainly considered:
polynomial tractability, i.e., $\Ninfo \leq C \varepsilon^{-\tau_1} s^{\tau_2}$,
and strong polynomial tractability, i.e., $\Ninfo \leq C \varepsilon^{-\tau_1}$
for $\tau_1, \tau_2 \geq 0$.
A common way to obtain tractability is to consider weighted function spaces
as introduced by Sloan and Wo{\'z}niakowski \cite{Sloan1998waq}.
Weighted spaces here mean that the dependence on the successive variables can be
moderated by weights.
Our weights $\bsu$ play the same role.
For tractability results for spaces of smooth functions,
see also \cite{Hinrichs2014cdn}.

The second purpose of this paper is
to give a sufficient condition to achieve super-polynomial convergence with strong tractability.
We show that if the weights $\bsu$ decay sufficiently fast
then the integration error of QMC rules using digital nets for $\Sspace$ can 
achieve dimension-independent super-polynomial convergence as $O(\exp({-c(\log{n})^p}))$,
where the hidden constant and $c$ are independent of $s$ and $n$,
and $1 < p  <2$ is determined from the decay of  $\bsu$.
This implies $\Ninfo \leq C\exp(c\logeppow{1/p})$ for some $C, c \geq 0$.

These convergence and tractability results are also shown for the so-called Walsh space $\Wspace$
into which $\Sspace$ is embedded, which implies those for $\Sspace$.
For $\Wspace$, 
we show that the rate of convergence is of order $\exp(\Theta(-(\log n)^2))$
for all $\bsa$ considered
and the strong tractability result is equivalent to a typical decay of $\bsa$.

The rest of the paper is organized as follows.
In Section~\ref{sec:Walsh}, we give the necessary background
including Walsh functions, the Dick weight,
definitions of our function spaces and embeddings among them.
In Section~\ref{sec:integration}, we give precise definitions of the notions of
convergence and tractability used in this paper.
In Section~\ref{sec:results}, we present Theorems~\ref{thm:main}
and \ref{thm:main-Sspace},
which are the summary of all results in this paper.
Necessary and sufficient conditions for convergence and tractability
are given in Sections~\ref{sec:lower bounds} and \ref{sec:upper bounds},
respectively.


\section{Preliminaries}\label{sec:Walsh}
Throughout this paper, we shall use the following notation.
Let $\nat$ be the set of positive integers and $\natzero := \nat \cup \{0\}$.
For a positive integer $b\ge 2$, let $\bZ_b$ be a cyclic group with $b$ elements,
which we identify with the set $\{0,1,\dots,b-1\}$,
equipped with addition modulo $b$.
The operators $\oplus$ and $\ominus$ denote the digitwise addition and subtraction modulo $b$, respectively.
That is, for $k, k' \in\natzero$ whose $b$-adic expansions are
$k=\sum_{i=1}^{\infty}\kappa_i b^{i-1}$ and $k'=\sum_{i=1}^{\infty}\kappa'_i b^{i-1}$ with
$\kappa_i,\kappa'_i\in \Zb$ for all $i$, $\oplus$ and $\ominus$ are defined as
  \begin{align*}
    k\oplus k' = \sum_{i=1}^{\infty}\eta_i b^{i-1}\ \text{and}\ k\ominus k' = \sum_{i=1}^{\infty}\eta'_i b^{i-1},
  \end{align*}
where $\eta_i=\kappa_i+\kappa'_i \pmod b$
and $\eta'_i=\kappa_i-\kappa'_i \pmod b$, respectively.
In case of vectors in $\natzero^s$, the operators $\oplus$ and $\ominus$ are applied componentwise.

\subsection{Walsh functions}
In this subsection, we introduce Walsh functions and Walsh coefficients,
which are widely used in analyzing the integration error,
see \cite[Appendix~A]{Dick2010dna} for general information.
We first give the definition of Walsh functions for the one-dimensional case
and then generalize it to the higher-dimensional case.
\begin{definition}
Let $b \geq 2$ be a positive integer and let $\omega_b=\exp(2\pi \sqrt{-1}/b)$.
We denote the $b$-adic expansion of $k \in \natzero$ by
$k = \kappa_1 + \kappa_2 b + \dots  +\kappa_{i} b^{i-1}$
with $\kappa_1, \dots, \kappa_{i} \in \bZ_b$.
Then the {\it $k$-th $b$-adic Walsh function}
${}_b\wal_k \colon [0,1) \to \{1, \omega_b, \dots, \omega_b^{b-1}\}$ is defined as
  \begin{align*}
    {}_b\wal_k(x) := \omega_b^{\kappa_1 \xi_1 + \dots + \kappa_i \xi_i} ,
  \end{align*}
for $x\in [0,1)$ whose $b$-adic expansion is given by
$x=\xi_1b^{-1} + \xi_2b^{-2} + \cdots$,
which is unique in the sense that infinitely many of the $\xi_i$ are different from $b-1$.
\end{definition}

\begin{definition}
Let $b, s \in \bN$ with $b \geq 2$.
Let $\bsx=(x_1, \dots, x_s)\in [0,1)^s$ and $\bsk=(k_1, \dots, k_s)\in \natzero^s$.
The $\bsk$-th $b$-adic Walsh function
${}_b\wal_{\bsk} \colon [0,1)^s \to \{1,\omega_b, \dots, \omega_b^{b-1}\}$
is defined as
  \begin{align*}
    {}_b\wal_{\bsk}(\bsx) := \prod_{j=1}^s {}_b\wal_{k_j}(x_j) .
  \end{align*}
\end{definition}
Since we shall always use Walsh functions in a fixed base $b$,
we omit the subscript and simply write $\wal_k$ or $\wal_{\bsk}$ in this paper.
Some important properties of Walsh functions, used in this paper, are described below,
see \cite[Appendix~A.2]{Dick2010dna} for the proof.

\begin{proposition}\label{prop:walsh}
The following holds true:
\begin{enumerate}
\item For all $\bsk \in \natzero^s$, we have
  \begin{align*}
    \int_0^1 \wal_{\bsk}(\bsx) \dbsx = \begin{cases}
     1 & \text{if $\bsk=0$,}  \\
     0 & \text{otherwise.}
    \end{cases}
  \end{align*}
\item For all $\bsk, \bsl \in \natzero^s$, we have
  \begin{align*}
    \int_{[0,1)^s} \wal_{\bsk}(\bsx)\overline{\wal_{\bsl}(\bsx)} \dbsx
= \begin{cases}
     1 & \text{if $\bsk=\bsl$,}  \\
     0 & \text{otherwise.}
    \end{cases}
  \end{align*}
\item For all $\bsk, \bsk' \in \natzero^s$ and $\bsx \in [0,1)^s$, we have
\begin{align*}
\wal_{\bsk \oplus \bsk'}(\bsx) = \wal_{\bsk}(\bsx) \wal_{\bsk'}(\bsx), \\
\wal_{\bsk \ominus \bsk'}(\bsx) = \wal_{\bsk}(\bsx) \overline{\wal_{\bsk'}(\bsx)}.
\end{align*}
\item The system $\{\wal_{\bsk} \mid \bsk\in \natzero^s\}$ is a complete orthonormal system in $L^2[0,1)^s$ for any positive integer $s$.
\end{enumerate}
\end{proposition}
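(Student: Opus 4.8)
The plan is to reduce each of the four assertions to a one-dimensional base-$b$ digit computation and then pass to dimension $s$ by the product form of $\wal_{\bsk}$ together with Fubini's theorem. Accordingly I would not treat the assertions in the listed order: first I would establish the multiplicativity in the third assertion, then deduce the first and second from it, and finish with completeness.

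For the third assertion I would expand directly. For $k,k'\in\natzero$ with base-$b$ digits $\kappa_i,\kappa'_i$ and $x\in[0,1)$ with digits $\xi_i$, the definition gives $\wal_k(x)\wal_{k'}(x)=\omega_b^{\sum_i(\kappa_i+\kappa'_i)\xi_i}$; since $\omega_b^{b}=1$ the exponent is unchanged when each $\kappa_i+\kappa'_i$ is reduced modulo $b$, and those residues are precisely the digits of $k\oplus k'$, so $\wal_{k\oplus k'}(x)=\wal_k(x)\wal_{k'}(x)$. Multiplying over the $s$ coordinates yields the $s$-variate identity, and the $\ominus$-version follows in the same way from $\overline{\omega_b}=\omega_b^{b-1}$.

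For the first assertion, by Fubini and the product form it suffices to show $\int_0^1\wal_k(x)\,dx=0$ whenever $k\geq1$. I would let $a$ be the largest index with $\kappa_a\neq0$ and split $[0,1)$ into the $b^{a}$ elementary intervals on which $\xi_1,\dots,\xi_a$ are fixed; on each of them $\wal_k$ is constant and equal to $\omega_b^{\kappa_1\xi_1+\dots+\kappa_{a-1}\xi_{a-1}}\cdot\omega_b^{\kappa_a\xi_a}$, and summing over $\xi_a\in\Zb$ contributes the factor $\sum_{\xi_a=0}^{b-1}\omega_b^{\kappa_a\xi_a}=0$, because $\omega_b^{\kappa_a}$ is a nontrivial $b$-th root of unity for $1\leq\kappa_a\leq b-1$. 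The second assertion then drops out by writing $\wal_{\bsk}(\bsx)\overline{\wal_{\bsl}(\bsx)}=\wal_{\bsk\ominus\bsl}(\bsx)$ via the third assertion and applying the first: the integral equals $1$ if $\bsk\ominus\bsl=\bszero$, i.e.\ $\bsk=\bsl$, and $0$ otherwise.

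For the fourth assertion, orthonormality is just the second assertion, so the only remaining task is completeness of the linear span in $L^2[0,1)^s$. The plan is to show, for each $m\geq1$, that $\{\wal_k:0\leq k<b^m\}$ spans the same $b^m$-dimensional space as the indicator functions of the base-$b$ intervals $[cb^{-m},(c+1)b^{-m})$, $0\leq c<b^m$: each such $\wal_k$ is constant on each such interval, and the resulting matrix $\bigl(\wal_k(cb^{-m})\bigr)_{k,c}$ is, up to a permutation of its indices, the $m$-fold tensor power of the $b\times b$ matrix $(\omega_b^{\kappa\xi})_{\kappa,\xi=0}^{b-1}$, which is nonsingular since it is a scalar multiple of a unitary DFT matrix. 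Because the indicators of base-$b$ elementary intervals over all levels $m$ have dense linear span in $L^2[0,1)^s$ (using that $s$-variate elementary intervals are products of one-variate ones), completeness follows. I expect this density/nonsingularity step to be the only part needing more than a routine digit manipulation; everything else reduces to $\omega_b^b=1$ and a geometric sum.
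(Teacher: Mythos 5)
Your proof is correct. Note, however, that the paper does not actually prove this proposition: it states it and refers the reader to Appendix~A.2 of the book of Dick and Pillichshammer (the reference cited as \cite{Dick2010dna}), so there is no in-paper argument to compare against. Your route---establishing the third assertion first by digit arithmetic and $\omega_b^b=1$, deducing the first and second from the geometric sum $\sum_{\xi=0}^{b-1}\omega_b^{\kappa\xi}=0$ for $1\le\kappa\le b-1$ together with Fubini and the product form, and getting completeness from the fact that $\{\wal_k : 0\le k<b^m\}$ spans the indicators of the order-$m$ elementary intervals (nonsingularity of the tensor power of the DFT matrix, up to the digit-reversal permutation you note) plus density of such step functions in $L^2[0,1)^s$---is sound in every step and is essentially the standard argument given in that reference, so it fills the omission correctly.
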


We define the Walsh coefficients as follows.
\begin{definition}
Let $\bsk \in \natzero^s$ and $f \colon [0,1)^s \to \bC$.
The {\it $\bsk$-th Walsh coefficient} of $f$ is defined as
  \begin{align*}
     \widehat{f}(\bsk) := \int_{[0,1)^s} f(\bsx)\overline{\wal_{\bsk}(\bsx)} \dbsx .
  \end{align*}
\end{definition}
\noindent
The Walsh series of the function $f$ is given by
  \begin{align*}
     f(\bsx) \sim \sum_{\bsk\in \natzero^s}\widehat{f}(\bsk)\wal_{\bsk}(\bsx)
  \end{align*}
for any $f\in L^2[0,1)^s$.
We note that all functions considered in this paper are equal to their Walsh series,
see also the next subsection.


\subsection{Function spaces and embeddings}
In this subsection, we introduce the function spaces $\Sspace$, $\Wspace$ and $\mWspace$
considered in this paper and give embeddings from $\Sspace$ to $\Wspace$.

The space of smooth functions $\Sspace$ is defined as in \eqref{eq:Sspace}.
Throughout the paper, we always assume that
\begin{equation}\label{eq:weight-Sspace-decrease}
u_1 \geq u_2 \geq \cdots > 0.
\end{equation}

It is shown in \cite{Y} for $b=2$ and \cite{SY} for the general case
that Walsh coefficients of functions in $\Sspace$
decay sufficiently fast, as given in Theorem~\ref{thm:Sspace-Wcoeff} below.
To state the theorem,
we define the Hamming weight $v(k)$,
the generalized Dick weight $\Dwt{\bsk}$ and the modified Dick weight $\mDwt{\bsk}$
for $k \in \natzero$, $\bsa \in \bR^s$ and $\bsk \in \natzero^s$.
Note that the Dick weight is originally defined for $\bsa = 0$ in \cite{Matsumoto2014acf}.
\begin{definition}
Let $a \in \bR$ and $k \in \bN$ with $b$-adic expansion
$k = \sum_{i=1}^\infty \kappa_{i} b^{i-1}$ with $\kappa_{i} \in \Zb$.
Define the function $h$ as $h(\kappa) = 0$ for $\kappa =0$ and $h(\kappa) = 1$ for $\kappa \neq 0$.
The {\it Hamming weight} $v(k)$ is defined as the number of nonzero digits for the $b$-adic expansion of $k$, i.e.,
\[
v(k) := \sum_{i=1}^{\infty} h(\kappa_{i}).
\]

We define the {\it generalized Dick weight} $\Dwt[a]{k}$ and
the {\it modified Dick weight} $\mDwt[a]{k}$ for the 1-dimensional case as
\begin{align*}
\Dwt[a]{k}
:= \sum_{i=1}^{\infty} (i+a)h(\kappa_{i})
\qquad \text{and} \qquad
\mDwt[a]{k}
:= \sum_{i=1}^{\infty} \max(i+a, 1) h(\kappa_{i}).
\end{align*}
For the $s$-dimensional case,
let $\bsa = (a_1, \dots, a_s) \in \bR^s$ and $\bsk = (k_1, \dots, k_s) \in \natzero^s$
and we define $\Dwt{\bsk}$ and $\mDwt{\bsk}$ as
\begin{align*}
\Dwt[\bsa]{\bsk}
:= \sum_{j=1}^{s} \Dwt[a_j]{k_j}
\qquad \text{and} \qquad
\mDwt[\bsa]{\bsk}
:= \sum_{j=1}^s \mDwt[a_j]{k_j}.
\end{align*}
\end{definition}
\if 0
\begin{definition}
Let $\bsa = (a_1, \dots, a_s) \in \bR^s$ and $\bsk = (k_1, \dots, k_s) \in \natzero^s$.
We denote the $b$-adic expansion of $k_j$ by
$k_j = \sum_{i=1}^\infty \kappa_{j,i}b^{i-1}$ with $\kappa_{j,i} \in \Zb$
(this is actually a finite sum).
We define the generalized Dick weight $\Dwt{\bsk}$ and
the modified Dick weight $\mDwt{\bsk}$ as
\begin{align*}
\Dwt{\bsk}
&:= \sum_{j=1}^s \sum_{i=1}^{\infty} (i+a_j)h(\kappa_{j,i}),\\
\mDwt{\bsk}
&:= \sum_{j=1}^s \sum_{i=1}^{\infty} \max(i+a_j, 1) h(\kappa_{j,i}),
\end{align*}
where $h(\kappa) = 0$ for $\kappa =0$ and $h(\kappa) = 1$ for $\kappa \neq 0$.
The Hamming weight $v(k_j)$ is defined as the number of nonzero digits for the $b$-adic expansion of $k_j$, i.e.,
\[
v(k_j) := \sum_{i=1}^{\infty} h(\kappa_{j,i}).
\]
\end{definition}
\fi
We can now give the decay of Walsh coefficients
which appears in \cite[Corollary~3.10]{SY}.
\begin{theorem}\label{thm:Sspace-Wcoeff}
Put $\mb:= 2\sin(\pi/b)$ and $\Mb := 2\sin(\lfloor b/2 \rfloor \pi/b)$.
Assume $f \in \Sspace$. Then it follows that
\[
|\widehat{f}(\bsk)|
\leq \Snorm{f} b^{-\Dwt[0]{\bsk}} \prod_{j=1}^s (\mb^{-1} u_j)^{v(k_j)} \CWalcoeff^{\min(1, v(k_j))},
\]
where 
$\CWalcoeff = 2$ for $b = 2$ and
$\CWalcoeff = \Mb + b\mb/(b - \Mb)$ otherwise.
\end{theorem}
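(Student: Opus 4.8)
The plan is to bound the Walsh coefficient $\widehat{f}(\bsk) = \int_{[0,1)^s} f(\bsx)\overline{\wal_{\bsk}(\bsx)}\dbsx$ by iterated integration by parts in each variable, exploiting the self-similar (dyadic-block) structure of the Walsh functions. First I would reduce to the one-dimensional case: since $\wal_{\bsk}(\bsx) = \prod_j \wal_{k_j}(x_j)$ and the bound splits as a product over $j$, one proves a one-variable estimate for $\int_0^1 g(x)\overline{\wal_k(x)}\,dx$ in terms of the $L^1$-norms of the derivatives $g^{(\alpha)}$, then applies it to each of the $s$ partial integrations in turn (treating the other variables as parameters, and using $\|f^{(\bsalpha)}\|_{L^1} \leq \Snorm{f}\prod_j u_j^{\alpha_j}$ at the end). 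So the crux is: for $k$ with $b$-adic expansion having nonzero digits $\kappa_{i}$ in positions $i \in S$, show $|\widehat{g}(k)| \lesssim b^{-\Dwt[0]{k}} (\mb^{-1})^{v(k)} \CWalcoeff^{\min(1,v(k))} \|g^{(v(k))}\|_{L^1}$ or a closely related mixed-derivative bound.

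The key one-dimensional step I would carry out as follows. Write the highest nonzero digit position of $k$ as $a_{v}$ and perform one integration by parts: $\wal_k$ is constant on intervals of length $b^{-a_v}$ up to a character factor, so $\int_0^1 g(x)\overline{\wal_k(x)}\,dx = \int_0^1 g'(x) \overline{W_k(x)}\,dx$ where $W_k$ is an antiderivative of $\wal_k$ supported (after subtracting its mean, which vanishes since $k \neq 0$) on these small intervals, with $\|W_k\|_{L^\infty} \lesssim b^{-a_v}$ times a geometric factor coming from $\sum_{\xi=0}^{b-1}|1 - \omega_b^{-\kappa\xi}|$-type sums — this is exactly where the constants $\mb = 2\sin(\pi/b)$ and $\Mb$ enter, through bounding $|\sum_{\xi<t}\omega_b^{\kappa\xi}| \leq 1/\mb$ and the endpoint-correction term by $\Mb + b\mb/(b-\Mb)$. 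Iterating this $v(k)$ times, peeling off one nonzero digit position per step from the top down, produces the factor $b^{-\sum_i i\, h(\kappa_i)} = b^{-\Dwt[0]{k}}$ (each step at position $a_r$ contributes $b^{-a_r}$, and the antiderivatives at lower levels are still controlled because the digit structure is nested), the factor $\mb^{-v(k)}$, and after $v(k)$ integrations by parts a single $L^1$-norm $\|g^{(v(k))}\|_{L^1}$; the $\CWalcoeff$ appears once (hence the exponent $\min(1,v(k))$) from the first, coarsest integration by parts whose antiderivative is not compactly supported in a small subinterval.

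The main obstacle, and where I expect the real work to be, is the bookkeeping of the nested antiderivatives across successive integrations by parts: after the first step the relevant kernel is supported on the union of the $b^{a_{v}}$ small intervals, on each of which $\wal_k$ restricted and renormalized again has the "Walsh" form for a shorter digit string, so one needs an inductive statement about these restricted/renormalized Walsh functions (essentially a scaling/self-similarity lemma) to keep the geometric constants from compounding to something worse than $\mb^{-v(k)}$ and $\CWalcoeff^{\min(1,v(k))}$. Getting the constant in front exactly right — distinguishing $b=2$ (where one can use real-valued arguments and gets $\CWalcoeff = 2$) from $b \geq 3$ — is the delicate point; for the general $b$ case the endpoint terms in integration by parts do not vanish and must be summed carefully, which is the source of the $\Mb + b\mb/(b-\Mb)$ expression. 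Once the one-dimensional estimate is in hand with the correct constants, the $s$-dimensional result follows immediately by Fubini and applying the estimate coordinate by coordinate, since both $\Dwt[0]{\bsk} = \sum_j \Dwt[0]{k_j}$ and $v$ are additive over coordinates and the bound on $\|f^{(\bsalpha)}\|_{L^1}$ is multiplicative in the $u_j$.
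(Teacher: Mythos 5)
A preliminary remark: the paper does not prove Theorem~\ref{thm:Sspace-Wcoeff} at all --- it is quoted from \cite[Corollary~3.10]{SY} (with \cite{Y} covering $b=2$), so there is no in-paper argument to compare against. Your overall route is in fact the route of that cited source: reduce to one coordinate via Fubini, integrate by parts once for each nonzero $b$-adic digit of $k_j$ (the digit at position $i$ contributing $b^{-i}$, whence $b^{-\Dwt[0]{\bsk}}$), extract $\mb^{-1}$ per digit from root-of-unity sums, and finish with $\|f^{(\bsalpha)}\|_{L^1}\le \Snorm{f}\prod_j u_j^{\alpha_j}$ for $\alpha_j=v(k_j)$. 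So the strategy is sound and is not a different method.

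As a proof, however, the proposal has a genuine gap, and it sits exactly where you yourself locate ``the main obstacle.'' The entire content of the theorem is the precise constant $\mb^{-v(k_j)}\CWalcoeff^{\min(1,v(k_j))}$ per coordinate, and this cannot be extracted from the qualitative picture of ``nested antiderivatives on small intervals.'' After one integration by parts the kernel $J_k(x)=\int_0^x\overline{\wal_k(t)}\,dt$ is not a scalar multiple of a Walsh function; to iterate one needs its exact Fine/Dick-type Walsh expansion: a main term at the frequency obtained by deleting the leading digit $\kappa_a$ of $k$, with coefficient of modulus $b^{-a}\,|1-\omega_b^{-\kappa_a}|^{-1}\le b^{-a}\mb^{-1}$, plus an infinite tail at frequencies with digits added above position $a$, whose summed mass is precisely where $\Mb$ and $b\mb/(b-\Mb)$ enter. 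Your sketch never states or proves this expansion, nor the inductive bookkeeping showing that the tail contributions across all $v(k)$ peeling steps can be organized so that they cost a single factor $\CWalcoeff$ per coordinate; a naive iteration in which every step carries its own endpoint/tail correction degrades the constant to $\CWalcoeff^{v(k)}$, strictly weaker than the claimed $\CWalcoeff^{\min(1,v(k))}$ (and your attribution of the single $\CWalcoeff$ to the ``first, coarsest'' step is exactly the kind of claim that only the explicit formula can justify). The multivariate reduction and the final use of the norm $\Snorm{f}$ are fine; what is missing is the quantitative antiderivative/recursion lemma, which is the actual substance of \cite{SY} and \cite{Y}.
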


This decay motivates us to define Walsh spaces $\Wspace$ and $\mWspace$
of Walsh series whose Walsh coefficients are controlled
by the generalized (resp.\ modified) Dick weight.
Let $\bsa = (a_j)_{j \geq 1}$ be a sequence of real-valued weights.
Throughout the paper, we assume
\begin{equation}\label{eq:weight-increasing}
a_1 \leq a_2 \leq a_3 \leq \cdots,
\end{equation}
which corresponds to \eqref{eq:weight-Sspace-decrease}.
We first define $\Wspace$ as
\[
\Wspace
:= \left\{f\colon [0,1)^s \to \bR \midmid f(\bsx) = \sum_{\bsk \in \natzero^s} \widehat{f}(\bsk) \wal_{\bsk}(\bsx) \,\text{ and }\, \Wnorm{f} < \infty \right\}
\]
equipped with the norm
\[
\Wnorm{f} := \sup_{\bsk \in \natzero^s} |\widehat{f}(\bsk) b^{\Dwt{\bsk}}|
\]
and $\mWspace$ as
\[
\mWspace
:= \left\{f \in \Wspace \midmid \mWnorm{f} := \sup_{\bsk \in \natzero^s} |\widehat{f}(\bsk) b^{\mDwt{\bsk}}| < \infty \right\}.
\]
\noindent
Note that all Walsh series in $\Wspace$ and $\mWspace$ converge.
Indeed, for all $X \in (-1, 1)$ and a positive integer $l$, we have
\begin{align*}
\sum_{\substack{\bsk \in \natzero^s \\ k_j < b^l \, \forall j}} X^{\Dwt{\bsk}}
&= \sum_{\substack{\bsk \in \natzero^s \\ k_j < b^l \, \forall j}} \prod_{j=1}^s\prod_{i=1}^l X^{(i+a_j)h(\kappa_{j,i})}\\
&= \prod_{j=1}^s\prod_{i=1}^l \sum_{\kappa_{j,i}=0}^{b-1} X^{(i+a_j)h(\kappa_{j,i})}\\
&= \prod_{j=1}^s \prod_{i=1}^l (1+(b-1)X^{i+a_j}),
\end{align*}
where we denote the $b$-adic expansion of $k_j$ by
$k_j = \sum_{i=1}^l \kappa_{j,i}b^{i-1}$ with $\kappa_{j,i} \in \bZ_b$
in the first equality,
and the right-most product converges for $l \to \infty$ if $|X| < 1$.
This is also true for the modified Dick weight
with $\Dwt{\bsk}$ and $i+a_j$
replaced by $\mDwt{\bsk}$ and $\max(i+a_j, 1)$.
Hence we have
\begin{align}\label{eq:power-series}
\sum_{\bsk \in \natzero^s} X^{\Dwt{\bsk}}
&= \prod_{j=1}^s \prod_{i=1}^\infty (1+(b-1)X^{i+a_j})& &\text{for all} \quad |X| < 1,\\
\label{eq:power-series-modified}
\sum_{\bsk \in \natzero^s} X^{\mDwt{\bsk}}
&= \prod_{j=1}^s \prod_{i=1}^\infty (1+(b-1)X^{\max(i+a_j, 1)})
& &\text{for all} \quad |X| < 1.
\end{align}
Thus all functions in $\Wspace$ and $\mWspace$ converge.

We now give embeddings from $\Sspace$ to $\Wspace$.
From Theorem~\ref{thm:Sspace-Wcoeff} we have
\begin{align*}
|\widehat{f}(\bsk)|
&\leq \Snorm{f} \prod_{j=1}^s \CWalcoeff^{\min(1, v(k_j))} b^{-\Dwt[-\log_b(\mb^{-1} u_j)]{\bsk}} \\
&\leq \Snorm{f} \prod_{j=1}^s b^{-\Dwt[-\log_b(\CWalcoeff \mb^{-1} u_j)]{\bsk}}
\end{align*}
for $f \in \Sspace$. Thus we obtain continuous embeddings
\begin{align}
\Sspace
&\subset \Wspace[\bsu']
& &\text{with}
& \Wnorm[\bsu']{f}
&\leq \Snorm{f} \label{eq:S-W-embed}
,\\
\Sspace
&\subset \Wspace[\bsu'']
& &\text{with}
& \Wnorm[\bsu'']{f}
&\leq \CWalcoeff^s \Snorm{f},
\end{align}
where
$\bsu' = (-\log_b(\CWalcoeff \mb^{-1} u_j))_{j \geq 1}$
and 
$\bsu'' = (-\log_b(\mb^{-1} u_j))_{j \geq 1}$.
Note that all functions in $\Sspace$ are equal to their Walsh expansions,
see \cite[Section~3.3]{Dick2008wsc} or \cite[Theorem~A.20]{Dick2010dna}.
Embedding \eqref{eq:S-W-embed} implies that
good algorithms for $\Wspace[\bsu']$ are also good for $\Sspace$.
Thus we mainly consider $\Wspace$ in the following sections.

The Walsh space $\mWspace$ is considered instead of $\Wspace$
in Section~\ref{sec:upper bounds},
since the modified Dick weight does not take negative values
and thus is easier to treat.
Actually, $\Wspace$ and $\mWspace$ are 
norm equivalent.
Indeed, we have
\begin{align*}
\Dwt{\bsk}
\leq \mDwt{\bsk}
\leq \Dwt{\bsk}
+ \sum_{j=1}^s \sum_{i \in \NegSet{j}} (1 - (i + a_j))
\end{align*}
for all $\bsk \in \natzero^s$,
where $\NegSet{j}$ is defined as $\NegSet{j} := \{i \in \nat \mid i + a_j \leq 1 \}$.
Thus
\begin{equation}\label{eq:W-mW-embed}
\Wnorm{f} \leq \mWnorm{f} \leq b^{\sum_{j=1}^s \sum_{i \in \NegSet{j}} (1 - (i + a_j))}\Wnorm{f},
\end{equation}
where the empty sum equals 0, which implies the norm-equivalence of $\Wspace$ and $\mWspace$.
This means that
we can consider $\mWspace$ instead of $\Wspace$
for convergence results.
Furthermore, in Section~\ref{sec-upper-trac}, where we consider tractability results,
we shall assume a condition on the weights which implies
that the constant factor in \eqref{eq:W-mW-embed} is bounded independently of $s$.


\section{Integration}\label{sec:integration}
Let $\Hspace = \Sspace$, $\Wspace$ or $\mWspace$.
We consider multivariate integration
\[
I(f) = \int_{[0,1)^s} f(\bsx) \dbsx \qquad \text{for all} \quad f \in \Hspace.
\]
We approximate $I(f)$ by algorithms
\[
A_{n,s}(f) = \phi_{n,s}(f(\bst_1), \dots, f(\bst_n)), 
\]
where $\phi_{n,s}\colon \bR^n \to \bR$ is an arbitrary function and $\bst_i \in [0,1)^s$ for $i = 1, \dots, n$.
The {\it worst-case error} of the algorithm $A_{n,s}$ in the space $\Hspace$ is defined by
\[
\worsterr(A_{n,s}, \Hspace)
= \sup_{\substack{f \in \Hspace \\ \Hnorm{f} \leq 1}} |I(f) - A_{n,s}(f)|.
\]
\noindent
Let $\minerrS$ be the {\it $n$-th minimal worst-case error},
\[
\minerr
= \minerrS
= \inf_{A_{n,s}} \worsterr(A_{n,s}, \Hspace),
\]
where the infimum is extended over all algorithms using $n$ function values.
For $n=0$, we approximate $I(f)$ by a real number.
Since $\Hspace$ is symmetric, i.e., $f \in \Hspace$ implies $-f \in \Hspace$,
the zero algorithm is the best for $n=0$, and thus we have $e(0,s,\Hspace)=1$.
Hence the integration problem is well normalized for all $s$.

For $\varepsilon \in (0,1)$, we define the {\it information complexity} of integration
\[
\Ninfo = \NinfoS = \min \{n \in \nat \mid \minerrS \leq \varepsilon\}
\]
as the minimal number of function values needed to obtain an $\varepsilon$-approximation.

We are interested in the convergence of the minimal worst-case error of the form
\begin{equation}\label{eq:acc-conv}
\minerr \leq C(s) e^{-c(s) (\log{n})^{p}} \qquad \text{for all $s, n \in \nat$},
\end{equation}
where $C(s)$ and $c(s)$ are positive real numbers which may depend on $s$
and where $p>1$. The condition $p>1$ implies that this convergence is super-polynomial.
We note that if \eqref{eq:acc-conv} holds,
then for all $s\in \nat$ and $\varepsilon \in (0,1)$ we have
\begin{equation}\label{eq:acc-to-info}
\Ninfo \leq
\left\lceil \exp\left(\left(\frac{\log{C(s)} + \logep}{c(s)}\right)_+^{1/p}\right) \right\rceil,
\end{equation}
where $(X)_+ := \max(X,0)$ for $X \in \bR$.
Furthermore, 
\eqref{eq:acc-to-info} for $\varepsilon = C(s) e^{-c(s) (\log{n})^{p}}$ implies \eqref{eq:acc-conv}.
This means that \eqref{eq:acc-conv} is equivalent to \eqref{eq:acc-to-info},
which shows that
asymptotically $\Ninfo$ increases with order $\exp (O(\logeppow{1/p}))$
with respect to $\varepsilon$.
However, how does $\Ninfo$ depend 
on $s$?
This, of course, depends on $C(s)$ and $c(s)$ and
is the subject of tractability.
Tractability means that we control the behavior of $C(s)$ and $c(s)$
and rule out the cases for which $\Ninfo$ depends exponentially on $s$.
In this paper we consider two convergence behaviors as
\begin{equation}\label{eq:ACPT-conv}
\minerr \leq C \exp(As)  e^{-c (\log{n})^{p}} \qquad \text{for all $s, n\in \nat$},
\end{equation}
and
\begin{equation}\label{eq:ACST-conv}
\minerr \leq C e^{-c (\log{n})^{p}} \qquad \text{for all $s, n\in \nat$},
\end{equation}
for some $p > 1$ and $A, C, c>0$.
Applying \eqref{eq:acc-to-info} to \eqref{eq:ACPT-conv}
and using the inequality
$(X+Y)^{1/p} \leq X^{1/p} + Y^{1/p}$
for $X,Y \geq 0$,
\eqref{eq:ACPT-conv} implies that 
\begin{equation}\label{eq:AC-PT}
\Ninfo \leq C' \exp(c'\logeppow{1/p}) \exp(A's^{1/p})
\qquad \text{for all} \quad s \in \nat, \, \varepsilon \in (0,1)
\end{equation} 
for some $A', C', c' \geq 0$.
Conversely, \eqref{eq:AC-PT} implies \eqref{eq:ACPT-conv} for some $A, C, c \geq 0$,
which follows from the inequality $2^{-1+1/p}(X^{1/p} + Y^{1/p}) \leq (X+Y)^{1/p}$ for $X,Y \geq 0$.
Similarly \eqref{eq:ACST-conv} is equivalent to the fact that 
\begin{equation}\label{eq:AC-ST}
\Ninfo \leq C' \exp(c'\logeppow{1/p})
\qquad \text{for all} \quad s \in \nat, \, \varepsilon \in (0,1)
\end{equation} 
for some $C', c' \geq 0$,
which can be regarded as super-polynomial convergence with strong tractability.
Although \eqref{eq:AC-PT} shows that the information complexity may depend super-polynomially on $s$,
we will show that \eqref{eq:ACPT-conv} is equivalent to \eqref{eq:ACST-conv} for the Walsh space.



\section{Main results}\label{sec:results}
In this section, we present the main results of this paper.
The following theorems give the super-polynomial convergence and tractability results
for $\Wspace$ and $\Sspace$.
\begin{theorem}\label{thm:main}
Consider integration defined over the Walsh space $\Wspace$
with a weight sequence $\bsa$ satisfying \eqref{eq:weight-increasing}.
Then we have the following.
\begin{enumerate}
\item \label{item:main-UAC}
For fixed $\bsa$, there exist positive constants $c_{i,s}$ $(i=1,2,3,4)$
which may depend on $s$ and $\bsa$ such that it holds that 
\begin{align*}
\exp \left(-\frac{(\log{n})^2}{2s \log b} - c_{1,s}\log{n} - c_{2,s} \right)
&\leq
\minerrS[\Wspace]\\
&\leq c_{3,s} \exp(-c_{4,s} (\log{n})^2)
\end{align*}
for all $s$ and $n$.
In particular $\minerrS[\Wspace]$ is of order $\exp(\Theta(-(\log n)^2))$.

\item\label{item:main-AC-PT}
For any $\bsa$ considered, there do not exist $A, C, c \geq 0$ such that
\eqref{eq:ACPT-conv} holds for $p=2$.

\item \label{item:main-equiv}
Let $1<p<2$.
Then the following are equivalent.
\begin{enumerate}
\item The sequence $\bsa$ satisfies $\liminf_{j \to \infty} a_j/j^{(p-1)/(2-p)} > 0$, \label{item:main3-1}
\item There exist constants $A,C,c \geq 0$ such that for all $s, n\in \nat$ we have \label{item:main3-2}
\[
\minerr \leq C \exp(As)  e^{-c (\log{n})^{p}}.
\]
\item There exist constants $C,c \geq 0$ such that for all $s, n\in \nat$ we have \label{item:main3-3}
\[
\minerr \leq C e^{-c (\log{n})^{p}}.
\]
\end{enumerate}
\end{enumerate}
\end{theorem}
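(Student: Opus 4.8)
The plan is to deduce the three assertions by combining the matching lower bounds of Section~\ref{sec:lower bounds} with the digital-net constructions of Section~\ref{sec:upper bounds}; the skeleton is: prove the two-sided bound in \ref{item:main-UAC}, derive \ref{item:main-AC-PT} immediately from its lower half, and in \ref{item:main-equiv} establish the cycle \ref{item:main3-1}$\Rightarrow$\ref{item:main3-3}$\Rightarrow$\ref{item:main3-2}$\Rightarrow$\ref{item:main3-1} (the middle implication being trivial). For the lower bound in \ref{item:main-UAC} I would run a fooling-function argument: fixing $s,\bsa$ and given nodes $\bst_1,\dots,\bst_n$, take the finite set $K=\{\bsk\in\natzero^s\mid\Dwt{\bsk}\le T\}$, which contains $\bszero$, and work inside $V=\mathrm{span}\{\wal_{\bsk}\mid\bsk\in K\}$; once $\#K$ exceeds $n$ (with a little room) there is a nonzero $f\in V$ vanishing at every $\bst_i$ with $\widehat f(\bszero)\neq0$, and since $\Wnorm{f}\le b^{T}$ for all $f\in V$ this forces $\minerrS[\Wspace]\gtrsim b^{-T}$. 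The remaining input is a lower estimate for $\#\{\bsk\mid\Dwt{\bsk}\le T\}$: simply counting the $b^{vs}$ vectors whose digits all lie in the first $v$ positions of each coordinate (these have Dick weight at most $s(v(v+1)/2+v\,a_s)$) and taking $n=b^{vs}$ yields $\minerrS[\Wspace]\gtrsim\exp(-(\log n)^2/(2s\log b)-c_{1,s}\log n-c_{2,s})$.

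For the upper bound in \ref{item:main-UAC} I would pass to the norm-equivalent space $\mWspace$ (so that the weight $\mDwt{\bsk}$ is nonnegative, cf.~\eqref{eq:W-mW-embed}); for QMC integration along a digital net $P$ the worst-case error over the unit ball of $\mWspace$ equals $\sum_{\bsk\in P^{\perp}\setminus\{\bszero\}}b^{-\mDwt{\bsk}}$, and a first-moment (averaging) argument over random generating matrices shows that if $\#\{\bsk\mid\mDwt{\bsk}\le T\}<n$ then some net has no nonzero dual vector of weight $\le T$, whence its error is at most the tail $\sum_{\mDwt{\bsk}>T}b^{-\mDwt{\bsk}}$, which by \eqref{eq:power-series-modified} is $\asymp b^{-T}$. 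Since $\log_b\#\{\bsk\mid\mDwt{\bsk}\le T\}\asymp\sqrt{sT}$, the choice $T\asymp(\log_b n)^2/s$ gives the matching upper bound $c_{3,s}\exp(-c_{4,s}(\log n)^2)$, which together with the lower bound establishes the $\exp(\Theta(-(\log n)^2))$ behaviour. Assertion~\ref{item:main-AC-PT} then falls out of the lower bound: if \eqref{eq:ACPT-conv} held with $p=2$ and some $A,C,c$, then choosing $s$ with $c>1/(2s\log b)$ and letting $n\to\infty$ would force $(c-1/(2s\log b))(\log n)^2\le\log C+As+c_{1,s}\log n+c_{2,s}$, which is absurd.

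In \ref{item:main-equiv}, \ref{item:main3-3}$\Rightarrow$\ref{item:main3-2} is trivial. For \ref{item:main3-1}$\Rightarrow$\ref{item:main3-3} I would sharpen the construction above: with $a_j\gtrsim j^{\beta}$, $\beta=(p-1)/(2-p)$, a saddle-point estimate of the generating series \eqref{eq:power-series-modified} gives $\log_b\#\{\bsk\mid\mDwt{\bsk}\le T\}\asymp T^{(\beta+1)/(2\beta+1)}$, and since $(\beta+1)/(2\beta+1)=1/p$ for exactly this $\beta$, a net with $b^{m}$ points annihilates all nonzero dual vectors of weight $\le T\asymp m^{p}$ and has error $\asymp b^{-T}=\exp(-\Theta((\log n)^{p}))$; the crucial extra point is that every implied constant stays $s$-free, which uses the uniform convergence of $\sum_{j\ge1}\sum_{i\ge1}b^{-\max(i+a_j,1)}$ under~\ref{item:main3-1} and the $s$-boundedness of the factor in \eqref{eq:W-mW-embed}. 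Finally \ref{item:main3-2}$\Rightarrow$\ref{item:main3-1} I would prove by contraposition: if $\liminf_{j}a_j/j^{\beta}=0$ then, $\bsa$ being nondecreasing, $a_j/j^{\beta}\to0$ uniformly on intervals $[s/2,s]$ along an infinite set of dimensions $s$; for those $s$ the cardinality bound of \ref{item:main-UAC} gives $\minerr\gtrsim\exp(-(\log n)^2/(2s\log b)-(\log n)a_s-O(\log n))$, and a suitable choice $n=b^{m(s)}$, making $m^{p}$ dominate $s$, $m^{2}/s$ and $m\,a_s$ simultaneously, contradicts $C\exp(As)e^{-c(\log n)^{p}}$ for every fixed $A,C,c$.

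I expect this last converse, \ref{item:main3-2}$\Rightarrow$\ref{item:main3-1}, to be the main obstacle: along the bad subsequence the rate at which $a_s\to\infty$ is not controlled, so the witnessing pair $(s,n)$ has to be produced by a case distinction on the size of $a_s$ relative to $(\log n)^{p}$ --- when $a_s$ stays bounded one uses the pure $\exp(-(\log n)^2/(2s\log b))$ estimate, while when $a_s$ grows one must keep $m(s)$ inside the narrow window on which $\exp(-(\log n)a_s)$ still dominates $\exp(-c(\log n)^{p})$ yet $m^{p}\gg s$. The second place where care is needed is checking that the constants in \ref{item:main3-1}$\Rightarrow$\ref{item:main3-3} are genuinely independent of $s$, i.e.\ that nothing in the saddle-point count or in the norm-equivalence degrades as $s\to\infty$.
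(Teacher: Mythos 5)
Your architecture matches the paper's: the lower bound comes from a fooling-function argument over a finite index set, the upper bound from a first-moment argument over random generating matrices combined with a count of $\{\bsk : \mDwt{\bsk}\le M\}$ and a geometric tail estimate, item 2 follows from the lower bound by comparing leading coefficients as $n\to\infty$, and 3(a)$\Rightarrow$3(c) is exactly the paper's weighted count (your ``saddle-point'' estimate is the paper's Gamma-function bound in Lemma~\ref{lem:sum of power of X}, giving the exponent $(r+1)/(2r+1)=1/p$), with the same two $s$-uniformity checks (finiteness of $\sum_j |\NegSet{j}|$ and boundedness of the constant in \eqref{eq:W-mW-embed}). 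Two gaps remain. First, in the fooling-function step you assert that once $|K|>n$ there is a nonzero $f$ vanishing at the nodes with $\widehat f(\bszero)\neq 0$; solving the homogeneous system does not give that for free. The paper's fix is to normalize a coefficient of maximal modulus $c_{\bsk^*}=1$ and multiply by $\overline{\wal_{\bsk^*}}$, so the new zeroth coefficient equals $c_{\bsk^*}\neq 0$; the price is that the norm is controlled by $\max b^{\Dwt{\bsk\ominus\bsk^*}}$, which is why the paper works with the box $\Acal_{s,d}$ (closed under $\ominus$) rather than the level set $\{\Dwt{\bsk}\le T\}$ you start from (your eventual count in the box is fine, but you should also pass to $\bsa'=\max(\bsa,0)$, as in Theorem~\ref{thm:UAC-nes}, since Corollary~\ref{cor:lower-bound} needs nonnegative weights). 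Your claim that the tail is $\asymp b^{-T}$ is slightly optimistic (the volume factor costs a factor in the exponent, giving $\exp(-T(\log b)/2)$ as in Lemma~\ref{thm:minwt-to-WAFOM-conv}), but this only rescales constants and does not affect any statement.

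Second, the converse 3(b)$\Rightarrow$3(a), which you yourself flag as the main obstacle, is left as a plan rather than a proof, and the case distinction you sketch is where your route diverges from the paper. The paper does not argue by contraposition at all: assuming 3(b), it sets $\bsa'=\max(\bsa,1)$, chooses $d=\lfloor a'_s\rfloor$ (so $N(s)=b^{s\lfloor a'_s\rfloor}-1$, tying the resolution to the weight itself rather than to a power of $s$), applies Corollary~\ref{cor:lower-bound}, and after taking logarithms obtains directly the quantitative bound $\liminf_j a'_j/j^r \ge (c(\log b)^{p-1}/3)^{1/(2-p)}>0$ — no subsequence, no window, no cases. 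Your contrapositive can in fact be completed, and more simply than you anticipate: along a subsequence with $a'_s\le\epsilon_s s^{r}$, $\epsilon_s\to0$, the single choice $\log_b n=m=\lfloor\eta s^{1/(2-p)}\rfloor$ with a small constant $\eta=\eta(c,b)$ already makes $m^2/(2s\log b)\le \tfrac{c}{8}m^p(\log b)$-type terms, $a'_s m\le\epsilon_s\eta^{1-p}m^p$, and $As+s(1+a'_s)\log b=o(m^p)$ simultaneously, so no bounded/unbounded case split on $a_s$ is needed; but as written your proposal stops short of producing the witnessing pair, and the paper's direct computation is the cleaner way to close this implication.
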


\begin{theorem}\label{thm:main-Sspace}
Consider integration defined over $\Sspace$
with a weight sequence $\bsu$ satisfying \eqref{eq:weight-Sspace-decrease}.
Then we have the following.
\begin{enumerate}
\item There exist positive constants $c_{5,s}$ and $c_{6,s}$ depending on $s$ and $\bsu$ such that \label{item:maincor-UAC}
\[
\minerrS[\Sspace] \leq c_{5,s}\exp(-c_{6,s} (\log{n})^2) \qquad \text{for all $s, n\in \nat$}.
\]
\item Let  $1 < p < 2$ be a real number. 
If the weight sequence $\bsu$ satisfies
$\liminf_{j \to \infty} \log(u_j^{-1})/j^{(p-1)/(2-p)} > 0$,
then there exist constants $C,c \geq 0$ such that \label{item:maincor-ACST}
\[
\minerr \leq C e^{-c (\log{n})^{p}} \qquad \text{for all $s, n\in \nat$}.
\]
\end{enumerate}
\end{theorem}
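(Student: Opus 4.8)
The plan is to derive both parts directly from Theorem~\ref{thm:main}, transporting it through the continuous embedding of $\Sspace$ into a Walsh space recorded in \eqref{eq:S-W-embed}. Put $\bsu' := (-\log_b(\CWalcoeff \mb^{-1} u_j))_{j \geq 1}$, so that \eqref{eq:S-W-embed} reads $\Sspace \subset \Wspace[\bsu']$ with $\Wnorm[\bsu']{f} \leq \Snorm{f}$. The first step is to observe that this norm inequality makes the unit ball of $\Sspace$ a subset of the unit ball of $\Wspace[\bsu']$; since each $f \in \Sspace$ equals its Walsh expansion, it is literally the same element of $\Wspace[\bsu']$ and $I$ acts on it in the same way. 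Hence $\worsterr(A_{n,s}, \Sspace) \leq \worsterr(A_{n,s}, \Wspace[\bsu'])$ for every algorithm using $n$ function values, and taking the infimum over all such algorithms gives
\[
\minerrS[\Sspace] \leq \minerrS[{\Wspace[\bsu']}] \qquad \text{for all} \quad s, n \in \nat .
\]

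The second step is to check that $\bsu'$ satisfies the standing assumption \eqref{eq:weight-increasing} required by Theorem~\ref{thm:main}; this is immediate from \eqref{eq:weight-Sspace-decrease}, since $\CWalcoeff$ and $\mb$ are positive constants and $u_j$ is positive and nonincreasing, so $\CWalcoeff \mb^{-1} u_j$ is positive and nonincreasing and hence $u'_j = -\log_b(\CWalcoeff \mb^{-1} u_j)$ is nondecreasing in $j$. With this, part~\eqref{item:maincor-UAC} follows by applying the upper bound of Theorem~\ref{thm:main}\,\eqref{item:main-UAC} to $\Wspace[\bsu']$: it provides positive constants $c_{3,s}, c_{4,s}$, depending on $s$ and $\bsu'$ and hence on $s$ and $\bsu$, such that $\minerrS[{\Wspace[\bsu']}] \leq c_{3,s}\exp(-c_{4,s}(\log n)^2)$; combining this with $\minerrS[\Sspace] \leq \minerrS[{\Wspace[\bsu']}]$ and renaming the constants $c_{5,s}, c_{6,s}$ gives the claim.

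For part~\eqref{item:maincor-ACST}, the remaining step is to translate the assumed decay of $\bsu$ into the weight condition of Theorem~\ref{thm:main}\,\eqref{item:main-equiv} for the sequence $\bsu'$. Writing $u'_j = \log(u_j^{-1})/\log b - \log_b(\CWalcoeff \mb^{-1})$ and setting $\beta := (p-1)/(2-p)$, which is positive because $1 < p < 2$, the bounded additive term is negligible against $j^{\beta} \to \infty$, so
\[
\liminf_{j \to \infty} \frac{u'_j}{j^{\beta}} = \frac{1}{\log b}\, \liminf_{j \to \infty} \frac{\log(u_j^{-1})}{j^{\beta}} > 0
\]
under the hypothesis of part~\eqref{item:maincor-ACST}. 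Thus the weight condition of Theorem~\ref{thm:main}\,\eqref{item:main-equiv} holds for $\bsu'$, and the equivalence stated there yields absolute constants $C, c \geq 0$ with $\minerrS[{\Wspace[\bsu']}] \leq C e^{-c(\log n)^p}$ for all $s, n$. Combining with $\minerrS[\Sspace] \leq \minerrS[{\Wspace[\bsu']}]$ completes the proof.

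I do not anticipate a genuine obstacle, since all of the analytic content is already contained in Theorem~\ref{thm:main} and the argument is essentially bookkeeping: one only has to handle the monotonicity of $\bsu'$, the elementary fact that enlarging the unit ball cannot decrease the minimal worst-case error, and the verification that the affine reparametrization $u_j \mapsto u'_j$ preserves the growth condition characterizing strong tractability.
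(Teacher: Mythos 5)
Your proposal is correct and follows essentially the same route as the paper: the paper derives Theorem~\ref{thm:main-Sspace} from the embedding \eqref{eq:S-W-embed} into $\Wspace[\bsu']$ together with the Walsh-space upper bounds (Corollaries~\ref{cor:conv-result} and \ref{cor:trac-result}), which is exactly the content of Theorem~\ref{thm:main} that you invoke. Your explicit checks that $\bsu'$ is nondecreasing and that $\liminf_{j\to\infty} u'_j/j^{(p-1)/(2-p)}>0$ follows from the assumed decay of $\bsu$ are the same bookkeeping the paper performs implicitly.
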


These theorems follow from Theorems \ref{thm:UAC-nes}--\ref{thm:ACPT-nes2}
and Corollaries~\ref{cor:conv-result} and \ref{cor:trac-result}.


\section{Lower bounds}\label{sec:lower bounds}
We prove the following lower bound on $\minerrS[\Wspace]$
similarly to \cite[Theorem~1]{Dick2011eca},
which treats the Korobov space.
\begin{lemma}
Let $\Acal$ be a finite subset of $\natzero^s$.
Then for all $n < |\Acal|$ we have
\[
\minerrS[\Wspace] \geq \left(\max_{\bsk, \bsk^* \in \Acal} b^{\Dwt{\bsk \ominus \bsk^*}}\right)^{-1}.
\]
\end{lemma}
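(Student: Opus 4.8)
The plan is to use a standard averaging/bump-function argument adapted to the Walsh setting, following the approach of \cite[Theorem~1]{Dick2011eca}. Fix an algorithm $A_{n,s}$ using points $\bst_1, \dots, \bst_n \in [0,1)^s$ and the map $\phi_{n,s}$. Since $|\Acal| > n$, the linear map $\Psi \colon \bC^{|\Acal|} \to \bC^n$ sending a coefficient vector $(c_{\bsk})_{\bsk \in \Acal}$ to the vector of sampled values $\bigl(\sum_{\bsk \in \Acal} c_{\bsk} \wal_{\bsk}(\bst_i)\bigr)_{i=1}^n$ has nontrivial kernel. So there is a nonzero $(c_{\bsk})_{\bsk \in \Acal}$ for which the function $g := \sum_{\bsk \in \Acal} c_{\bsk} \wal_{\bsk}$ vanishes at every $\bst_i$; hence $A_{n,s}(g) = A_{n,s}(0) = 0$ while $A_{n,s}(-g) = 0$ as well. (One should work with real-valued functions; replacing $\wal_{\bsk}$ by its real/imaginary parts, or just noting $\Wspace$ consists of real functions whose Walsh series is real, lets us arrange $g$ real without losing the dimension count — this is the one bookkeeping point to handle carefully.)

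Next I would estimate $\Wnorm{g}$ and $|I(g)|$. By definition $\widehat{g}(\bsk) = c_{\bsk}$ for $\bsk \in \Acal$ and $0$ otherwise, so $\Wnorm{g} = \max_{\bsk \in \Acal} |c_{\bsk}| \, b^{\Dwt{\bsk}}$. Normalizing, set $f := g / \Wnorm{g}$, so $\Hnorm{f} \le 1$ and $\widehat{f}(\bsk) = c_{\bsk}/\Wnorm{g}$; in particular $|\widehat{f}(\bsk)| \le b^{-\Dwt{\bsk}}$ for all $\bsk$, with equality for at least one index. By Proposition~\ref{prop:walsh}(1), $I(f) = \widehat{f}(\bszero)$ if $\bszero \in \Acal$ and $I(f) = 0$ otherwise — this is the subtle point: to get a useful lower bound we must choose the witness index inside $\Acal$ to be $\bszero$, or more cleverly exploit translation. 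The clean fix, exactly as in \cite{Dick2011eca}, is to apply the digitwise shift $\bsx \mapsto \bsx \oplus \bssigma$ (for a suitable $\bssigma$) which by Proposition~\ref{prop:walsh}(3) multiplies each $\widehat{f}(\bsk)$ by a unimodular factor and permutes nothing, so WLOG the coefficient of maximal normalized modulus sits at an index of our choosing; composing instead with $\wal_{\bsk^*}$ replaces every Walsh index $\bsk$ by $\bsk \ominus \bsk^*$. Concretely: pick $\bsk, \bsk^* \in \Acal$ achieving $\max_{\bsk,\bsk^* \in \Acal} b^{\Dwt{\bsk \ominus \bsk^*}}$, consider $\tilde g := \overline{\wal_{\bsk^*}} \cdot g$ whose Walsh support is $\Acal \ominus \bsk^* \ni \bszero$ and which still vanishes at the (shifted) sample points, and run the same normalization; then $I(\tilde f) = \widehat{\tilde f}(\bszero)$ can be forced to have modulus $b^{-\Dwt{\bsk \ominus \bsk^*}}$ while $\Hnorm{\tilde f} \le 1$.

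Putting this together, $\worsterr(A_{n,s}, \Wspace) \ge |I(\tilde f) - A_{n,s}(\tilde f)| = |I(\tilde f)| = b^{-\Dwt{\bsk \ominus \bsk^*}} = \bigl(\max_{\bsk,\bsk^* \in \Acal} b^{\Dwt{\bsk \ominus \bsk^*}}\bigr)^{-1}$, and since this holds for every algorithm, taking the infimum gives the claimed bound on $\minerrS[\Wspace]$. I expect the main obstacle to be the real-valued reduction combined with getting the maximizing pair $(\bsk, \bsk^*)$ genuinely realized as the supremum-achieving index of a norm-one function whose integral we control: the kernel argument only produces \emph{some} nonzero $g$, so one has to argue that among the choices (or after multiplying by an appropriate $\wal_{\bsk^*}$ and choosing $\bssigma$) we can simultaneously (a) keep the function supported on a set containing $\bszero$, (b) keep it vanishing on the sample points, and (c) ensure the $\bszero$-th coefficient is the one of largest weight-normalized size. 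The resolution is that $\Dwt{\cdot}$ is minimized over the (translated) support precisely at the index $\bszero$ only after the right translation, and the pair maximizing $b^{\Dwt{\bsk \ominus \bsk^*}}$ is exactly the one that makes this work; everything else is the routine linear-algebra dimension count and the unimodularity of Walsh factors from Proposition~\ref{prop:walsh}.
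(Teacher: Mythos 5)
Your overall strategy (kernel fooling function, multiplication by $\overline{\wal_{\bsk^*}}$ to translate the Walsh support to $\bszero$, normalization via the pairwise maximum of $b^{\Dwt{\bsk\ominus\bsk^*}}$) is the paper's strategy, but the key selection step is wrong as written. You choose $\bsk^*$ as part of the pair \emph{maximizing} $b^{\Dwt{\bsk\ominus\bsk^*}}$ and then claim the $\bszero$-th coefficient of $\tilde f$ ``can be forced'' to have modulus $b^{-\Dwt{\bsk\ominus\bsk^*}}$. Nothing forces this: the kernel argument gives you \emph{some} nonzero coefficient vector $(c_{\bsk})$, and the coefficient $c_{\bsk^*}$ at your pre-selected index may be tiny or zero, in which case $I(\tilde f)=c_{\bsk^*}/\Wnorm{\tilde g}$ gives no useful lower bound. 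Your proposed resolution (that $\Dwt{\cdot}$ is minimized at $\bszero$ after ``the right translation'' and that the weight-maximizing pair is what makes this work) is not the correct mechanism; indeed with general real weights $a_j$ the value $\Dwt{\bszero}=0$ need not even be minimal over the translated support. The fix, which is what the paper does, is the reverse order of quantifiers: let $\bsk^*$ be the index where $|c_{\bsk}|$ attains its maximum in the kernel solution (normalized so $c_{\bsk^*}=1$), multiply by $\overline{\wal_{\bsk^*}}$, and scale by $C:=(\max_{\bsk,\bsk^*\in\Acal} b^{\Dwt{\bsk\ominus\bsk^*}})^{-1}$; since the pairwise maximum dominates $\max_{\bsk\in\Acal}b^{\Dwt{\bsk\ominus\bsk^*}}$ for \emph{whatever} $\bsk^*$ the kernel produced, one gets $\Wnorm{g_2}\le 1$ while $I(g_2)=C$ exactly. (The digital shift $\bssigma$ you invoke is a red herring and would in any case destroy the vanishing at the fixed nodes $\bst_i$; multiplication by $\overline{\wal_{\bsk^*}}$ alone preserves it.)

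Two further points need repair. First, your claims $A_{n,s}(g)=A_{n,s}(0)=0$ and, at the end, $|I(\tilde f)-A_{n,s}(\tilde f)|=|I(\tilde f)|$ silently assume $\phi_{n,s}(0,\dots,0)=0$, which is unjustified for arbitrary (possibly nonlinear) algorithms; the standard remedy, used in the paper, is to test both $f$ and $-f$ and observe $\max(|C-\phi_{n,s}(0,\dots,0)|,\,|C+\phi_{n,s}(0,\dots,0)|)\ge C$ — you mention $-g$ but never use it this way. Second, the real-valued reduction you leave as ``bookkeeping'' is cleanest done at the end by symmetrization, $f=(g_2+\overline{g_2})/2$, using $\Dwt{\bsk\ominus\bsk^*}=\Dwt{\bsk^*\ominus\bsk}$ to see $\Wnorm{\overline{g_2}}=\Wnorm{g_2}$ and arranging $c_{\bsk^*}=1$ so that $I(f)=C$ remains real; restricting the kernel computation to real Walsh polynomials supported on $\Acal$, as you suggest, is problematic because $\Acal$ need not be closed under the conjugation of Walsh indices.
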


\begin{proof}
Take an arbitrary algorithm $A_{n,s}(f) = \phi_{n,s}(f(\bst_1), \dots, f(\bst_n))$.
Define $g_1(\bsx) = \sum_{\bsk \in \Acal} c_{\bsk} \wal_{\bsk}(\bsx)$
for $c_{\bsk} \in \bC$
such that $g_1(\bst_i) = 0$ for all $i=1,2, \dots, n$.
Since we have $n$ homogeneous linear equations and $|\Acal| > n$
unknowns $c_{\bsk}$, there exists a nonzero vector of such $c_{\bsk}$'s,
and we can normalize the $c_{\bsk}$'s by assuming that
\[
\max_{\bsk \in \Acal} |c_{\bsk}| = c_{\bsk^*} = 1
\quad \text{for some} \quad \bsk^*\in \Acal.
\]
Define the function
\[
g_2(\bsx) := Cg_1(\bsx) \overline{\wal_{\bsk^*}(\bsx)}
= C\sum_{\bsk \in \Acal} c_{\bsk} \wal_{\bsk \ominus \bsk^*}(\bsx),
\]
where $C$ is defined as $C := (\max_{\bsk, \bsk^* \in \Acal} b^{\Dwt{\bsk \ominus \bsk^*}})^{-1}$.
Then we have
\begin{align*}
\Wnorm{g_2}
&= C \max_{\bsk \in \Acal} |c_{\bsk} b^{\Dwt{\bsk \ominus \bsk^*}}| \\
&\leq C \max_{\bsk \in \Acal} b^{\Dwt{\bsk \ominus \bsk^*}}
\leq C \max_{\bsk, \bsk^* \in \Acal} b^{\Dwt{\bsk \ominus \bsk^*}}
=1,
\end{align*}
where $\Wnorm{\cdot}$ is naturally extended to complex-valued Walsh series.

We now define a real-valued function
$
f(\bsx) := (g_2(\bsx) + \overline{g_2}(\bsx))/2,
$
where $\overline{g_2}(\bsx) := \overline{g_2(\bsx)}$.
Note that $\Wnorm{\overline{g_2}} = \Wnorm{g_2}$
since $\Dwt{\bsk \ominus \bsk^*} = \Dwt{\bsk^* \ominus \bsk}$
for all $\bsk$.
The norm of $f$ is bounded by 
\[
\Wnorm{f} \leq (\Wnorm{g_2} + \Wnorm{\overline{g_2}})/2 = \Wnorm{g_2} \leq 1.
\]
We note that $A_{n,s}(f) = \phi_{n,s}(0, \dots, 0)$ and $I(f) = Cc_{\bsk^*} = C$.
Hence,
\begin{align*}
\minerrS[\Wspace] \geq |I(f) - A_{n,s}(f)| \geq|C - \phi_{n,s}(0, \dots, 0)|.
\end{align*}
Further we consider the function $-f$. We have $\Wnorm{-f} \leq 1$ and $A_{n,s}(-f) = \phi_{n,s}(0, \dots, 0)$.
Hence,
\begin{align*}
\minerrS[\Wspace] \geq |I(-f) - A_{n,s}(-f)| \geq|C + \phi_{n,s}(0, \dots, 0)|.
\end{align*}
Combining these two inequalities, we have 
\begin{align*}
\minerrS[\Wspace] \geq \max(|C - \phi_{n,s}(0, \dots, 0)|, |C + \phi_{n,s}(0, \dots, 0)|) \geq C.
\end{align*}
Since this holds for arbitrary algorithm $A_{n,s}$, we conclude that $\minerr \geq C$,
as claimed.
\end{proof}

For a non-negative integer $d$, we now define
\[
\Acal_{s, d}
=\{\bsk \in \natzero^s \mid k_j < b^{d} \quad \text{for all} \quad j = 1,2,\dots, s \}.
\]
The cardinality of the set $|\Acal_{s, d}|$ is clearly $b^{sd}$.
If $a_j \geq 0$ holds for all $j$, then
\begin{align*}
\left(\max_{\bsk, \bsk^* \in \Acal_{s, d}} b^{\Dwt{\bsk \ominus \bsk^*}}\right)^{-1}
&= \left(\max_{\bsk \in \Acal_{s, d}} b^{\Dwt{\bsk}}\right)^{-1}\\
&= b^{-\sum_{j=1}^s \sum_{i=1}^{d} (i+a_j)}
= b^{-\sum_{j=1}^s (d(d + 1)/2 + a_j d)},
\end{align*}
where we use $\bsk \ominus \bsk^* \in \Acal_{s, d}$
for all $\bsk, \bsk^* \in \Acal_{s, d}$ for the first equality.
This implies the following corollary.

\begin{corollary}\label{cor:lower-bound}
Let $d \in \nat$ and
assume $a_j \geq 0$ for all $j$.
Then we have
\[
\minerrS[\Wspace]
\geq b^{-\sum_{j=1}^s (d^2/2 + (a_j + 1/2)d)} \qquad \text{for all $n < b^{sd}$}.
\]
\end{corollary}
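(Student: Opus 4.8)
The plan is simply to apply the preceding lemma to the explicit finite set $\Acal = \Acal_{s,d}$ and to substitute the value of the relevant maximum, which is exactly the quantity computed in the paragraph immediately before the statement.

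First I would record that $|\Acal_{s,d}| = b^{sd}$, so that the lemma applies for every $n$ with $n < b^{sd}$ and gives
\[
\minerrS[\Wspace] \geq \left(\max_{\bsk, \bsk^* \in \Acal_{s,d}} b^{\Dwt{\bsk \ominus \bsk^*}}\right)^{-1}.
\]
The substantive step is to evaluate the right-hand side. Since the digitwise difference modulo $b$ of two integers each less than $b^d$ is again an integer less than $b^d$, the set $\Acal_{s,d}$ is closed under $\ominus$; hence $\max_{\bsk,\bsk^*\in\Acal_{s,d}} b^{\Dwt{\bsk\ominus\bsk^*}} = \max_{\bsk\in\Acal_{s,d}} b^{\Dwt{\bsk}}$. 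Now, under the hypothesis $a_j \geq 0$ every coefficient $i + a_j$ (with $i\geq 1$) occurring in $\Dwt{\bsk} = \sum_{j=1}^s \sum_{i\geq 1}(i+a_j)h(\kappa_{j,i})$ is positive, so the weight is largest when all digits $\kappa_{j,i}$ with $1 \leq i \leq d$ are taken nonzero, giving $\max_{\bsk\in\Acal_{s,d}}\Dwt{\bsk} = \sum_{j=1}^s\sum_{i=1}^d (i+a_j) = \sum_{j=1}^s\bigl(d(d+1)/2 + a_j d\bigr)$.

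Finally I would rewrite $d(d+1)/2 + a_j d = d^2/2 + (a_j + 1/2)d$, so that the lower bound becomes $b^{-\sum_{j=1}^s(d^2/2 + (a_j+1/2)d)}$, valid for all $n < b^{sd}$, as claimed. I do not expect any genuine obstacle here: the argument is a direct specialization of the lemma, and the only points that deserve a word of comment are the closure of $\Acal_{s,d}$ under $\ominus$ (which is what lets one replace the double maximum over differences by a single maximum over $\Acal_{s,d}$) and the role of the hypothesis $a_j \geq 0$, which is precisely what ensures that the all-nonzero-digit configuration, rather than a sparser one, maximizes the Dick weight.
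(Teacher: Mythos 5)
Your proposal is correct and follows exactly the paper's own route: apply the preceding lemma to $\Acal_{s,d}$ with $|\Acal_{s,d}| = b^{sd}$, use closure of $\Acal_{s,d}$ under $\ominus$ together with $a_j \geq 0$ to evaluate $\max_{\bsk,\bsk^*\in\Acal_{s,d}} b^{\Dwt{\bsk\ominus\bsk^*}} = b^{\sum_{j=1}^s(d(d+1)/2 + a_j d)}$, and rewrite the exponent. Nothing further is needed.
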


We prove a lower bound for the worst-case error and a necessary condition to achieve \eqref{eq:ACPT-conv} for $\Wspace$
in the following three theorems with the following notation.
For $x \in \bR$, we define $\exp_b(x) := b^x$ 
and $\max(\bsa, x) := (\max(a_j, x))_{j=1}^s$.

\begin{theorem}\label{thm:UAC-nes}
Let $\bsa = (a_j)_{j=1}^s \in \bR^s$ and $\bsa' = (a'_j)_{j=1}^s := \max(\bsa,0)$.
Then we have
\[
\minerrS[\Wspace]
\geq \exp_b \left(-\frac{(\log{n})^2}{2s (\log b)^2} - \left(\frac{3s}{2}+\sum_{j=1}^s a'_j\right)\frac{\log{n}}{s\log{b}} - \left(s+\sum_{j=1}^s a'_j \right) \right).
\]
\end{theorem}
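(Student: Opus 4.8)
The plan is to obtain this bound from Corollary~\ref{cor:lower-bound} in two steps: a monotonicity reduction to nonnegative weights, followed by an optimal choice of the free parameter $d$.

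\emph{Step 1 (reduction to $\bsa \ge 0$).} I would set $\bsa' := \max(\bsa, 0)$, so that $a'_j \ge a_j$ for every $j$ and $\bsa'$ has nonnegative entries. Since $h(\kappa_{j,i}) \ge 0$, we get $\Dwt[\bsa']{\bsk} \ge \Dwt[\bsa]{\bsk}$ for all $\bsk \in \natzero^s$, hence (as $b > 1$) $\Wnorm[\bsa']{f} \ge \Wnorm[\bsa]{f}$ for every Walsh series $f$. Therefore the unit ball of $\Wspace[\bsa']$ is contained in that of $\Wspace$, which forces $\worsterr(A_{n,s}, \Wspace[\bsa']) \le \worsterr(A_{n,s}, \Wspace)$ for every algorithm, and taking the infimum over algorithms gives $\minerrS[\Wspace] \ge \minerrS[\Wspace[\bsa']]$. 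As $\bsa'$ has nonnegative entries, Corollary~\ref{cor:lower-bound} applies to it and yields
\[
\minerrS[\Wspace] \;\ge\; \minerrS[\Wspace[\bsa']] \;\ge\; b^{-\sum_{j=1}^s\left(d^2/2 + (a'_j + 1/2)d\right)}
\]
for every $d \in \nat$ with $n < b^{sd}$.

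\emph{Step 2 (choosing $d$ and expanding).} Assuming $n \ge 1$ (the case $n = 0$ is immediate since $e(0,s,\Wspace)=1$), I would take $d := \lfloor \log_b n / s \rfloor + 1$, the least positive integer with $n < b^{sd}$; it satisfies $1 \le d \le t + 1$ where $t := \log n/(s\log b)$. The exponent $\sum_{j=1}^s(d^2/2 + (a'_j+1/2)d)$ is increasing in $d > 0$, so substituting $d \le t+1$ and expanding $(t+1)^2$ and $(t+1)$, one collects the three contributions $\tfrac{d^2}{2}$, $\tfrac{d}{2}$ and $a'_j d$ and finds that this exponent is at most
\[
\frac{(\log n)^2}{2s(\log b)^2} + \left(\frac{3s}{2} + \sum_{j=1}^s a'_j\right)\frac{\log n}{s\log b} + \left(s + \sum_{j=1}^s a'_j\right),
\]
which, upon rewriting $b^{-(\cdot)} = \exp_b(-(\cdot))$, is exactly the claimed lower bound (in fact with equality at $n=1$).

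There is no genuine obstacle here; the argument is essentially bookkeeping on top of Corollary~\ref{cor:lower-bound}. The two points that need care are the direction of the monotonicity in Step~1 — enlarging the weights \emph{shrinks} the unit ball and hence \emph{decreases} the minimal error, so a lower bound for $\Wspace[\bsa']$ is legitimately a lower bound for $\Wspace$ — and, in Step~2, verifying that the chosen $d$ is a positive integer (so Corollary~\ref{cor:lower-bound} is applicable) and that the coefficients of $(\log n)^2$, of $\log n$, and the constant term assemble precisely into the stated expression.
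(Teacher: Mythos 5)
Your proposal is correct and follows essentially the same route as the paper: reduce to the nonnegative weights $\bsa'=\max(\bsa,0)$ via the norm monotonicity (equivalently the embedding $\Wspace[\bsa']\subset\Wspace$, which makes the minimal error for $\Wspace$ at least that for $\Wspace[\bsa']$), apply Corollary~\ref{cor:lower-bound} with $d=\lfloor \log n/(s\log b)\rfloor+1$, and expand the exponent using $d\le \log n/(s\log b)+1$. The bookkeeping you describe matches the paper's computation exactly, so no further changes are needed.
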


\begin{proof}
Let $n \in \nat$.
Put $d = \lfloor \log{n}/(s \log{b})\rfloor + 1$,
so that $b^{s(d-1)} \leq n < b^{sd}$.
It follows from Corollary~\ref{cor:lower-bound} and the embedding $\Wspace[\bsa'] \subset \Wspace$ that
\begin{align*}
\minerrS[\Wspace]
&\geq \minerrS[{\Wspace[\bsa']}]
\geq b^{-\sum_{j=1}^s (d^2/2 + (a'_j + 1/2)d)}\\
&\geq \exp_b\left(-{\frac{s}{2}\left(\frac{\log{n}}{s\log{b}}+1\right)^2 - \left(\frac{\log{n}}{s\log{b}}+1\right)\sum_{j=1}^s (a'_j + 1/2)}\right)\\
&\geq \exp_b \left(-\frac{(\log{n})^2}{2s (\log b)^2} - \left(\frac{3s}{2}+\sum_{j=1}^s a'_j\right)\frac{\log{n}}{s\log{b}} -s -\sum_{j=1}^s a'_j  \right),
\end{align*}
which proves the result.
\end{proof}

\begin{theorem}\label{thm:ACPT-nes1}
For any $\bsa$ considered, there do not exist $A, C, c \geq 0$ such that
\eqref{eq:ACPT-conv} holds for $p=2$.
\end{theorem}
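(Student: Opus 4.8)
The plan is to contradict the lower bound of Theorem~\ref{thm:UAC-nes}. Suppose such constants existed; in accordance with the definition of \eqref{eq:ACPT-conv} we may assume $c>0$, and we are given $A,C\geq 0$ with
\[
\minerrS[\Wspace] \leq C\exp(As)\, e^{-c(\log n)^2}\qquad\text{for all }s,n\in\nat .
\]
The key point is that the leading term $-(\log n)^2/(2s(\log b)^2)$ of the exponent in Theorem~\ref{thm:UAC-nes} has a coefficient that shrinks as the dimension $s$ grows, whereas the bound above forces the rate $e^{-c(\log n)^2}$ with $c$ fixed for \emph{every} $s$; taking $s$ large enough makes the two statements incompatible.

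Concretely, I would first fix one dimension $s$ large enough that $c>1/(2s\log b)$, so that
\[
\gamma := c-\frac{1}{2s\log b}>0 .
\]
For this $s$, writing $\bsa'=\max(\bsa,0)$ and $S:=\sum_{j=1}^s a'_j$, Theorem~\ref{thm:UAC-nes} gives
\[
\minerrS[\Wspace]\geq \exp_b\!\left(-\frac{(\log n)^2}{2s(\log b)^2}-\left(\tfrac{3s}{2}+S\right)\frac{\log n}{s\log b}-\left(s+S\right)\right)
\]
for all $n\in\nat$. Combining this with the assumed upper bound and taking natural logarithms, one rearranges the resulting inequality into
\[
\gamma\,(\log n)^2 \leq \log C + As + (s+S)\log b + \left(\tfrac{3s}{2}+S\right)\frac{\log n}{s}\qquad\text{for all }n\in\nat .
\]
Since $s$, and therefore $S$, is now fixed, the right-hand side is of order $O(\log n)$ while the left-hand side is $\gamma(\log n)^2$ with $\gamma>0$; letting $n\to\infty$ gives the desired contradiction.

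I do not expect a genuine obstacle here: the whole argument is a short asymptotic comparison once Theorem~\ref{thm:UAC-nes} is in hand. The one place that needs attention is the selection of $s$ --- one must exploit that the coefficient $1/(2s\log b)$ of $(\log n)^2$ in the lower bound can be pushed strictly below the prescribed constant $c$ by enlarging $s$, which is exactly what delivers $\gamma>0$. This also indicates why the corresponding statement is no longer true for exponents $p<2$: a sufficiently fast-growing weight sequence $\bsa$ lets the lower bound of Theorem~\ref{thm:UAC-nes} fall below $e^{-c(\log n)^p}$, and this is reflected in the equivalence in Theorem~\ref{thm:main}.
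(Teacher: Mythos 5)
Your proposal is correct and follows essentially the same route as the paper: contradict the lower bound of Theorem~\ref{thm:UAC-nes} by comparing the coefficients of $(\log n)^2$, the paper deducing $c\leq 1/(2s\log b)$ for every $s$ and then letting $s\to\infty$, while you fix a large $s$ with $c>1/(2s\log b)$ first and then let $n\to\infty$ — the same argument up to the order of the quantifiers. Your remark that $c>0$ is implicit in the definition of \eqref{eq:ACPT-conv} matches the paper's convention, so there is no gap.
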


\begin{proof}
We will argue by contradiction.
Let $\bsa' = \max(\bsa, 0)$.
Suppose that \eqref{eq:ACPT-conv} holds for some $A, C, c \geq 0$.
Then this assumption  and Theorem~\ref{thm:UAC-nes} imply that
\begin{align*}
\exp_b \left(-\frac{(\log{n})^2}{2s (\log b)^2} - \left(\frac{3s}{2}+\sum_{j=1}^s a'_j\right)\frac{\log{n}}{s\log{b}} - \left(s+\sum_{j=1}^s a'_j \right) \right)\\
\leq \minerrS[{\Wspace[\bsa']}]
\leq \minerrS[\Wspace]
\leq C \exp(As)  e^{-c (\log{n})^{2}}
\end{align*}
holds for all $s$ and $n$.
Taking the limit as $n$ goes to infinity, we have
$1/(2s \log b) \geq c$ for all $s$.
This is a contradiction.
\end{proof}

\begin{theorem}\label{thm:ACPT-nes2}
Consider integration defined over $\Wspace$
under \eqref{eq:weight-increasing}.
Let $1<p<2$ and assume that
\eqref{eq:ACPT-conv} holds for this $p$ and some $A, C, c \geq 0$.
Put $r := (p-1)/(2-p)$.
Then we have
\[
\liminf_{j \to \infty} \frac{a_j}{j^r} > 0.
\]
\end{theorem}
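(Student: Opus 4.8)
The strategy is to confront the assumed upper bound \eqref{eq:ACPT-conv} with the lower bound of Corollary~\ref{cor:lower-bound}, linking the dimension $s$, a truncation parameter $d$, and the sample size $n$ in an optimal way.

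Write $\bsa' := \max(\bsa,0)$ and $S_s := \sum_{j=1}^s a'_j$. As in the proof of Theorem~\ref{thm:UAC-nes}, the embedding $\Wspace[\bsa'] \subset \Wspace$ together with Corollary~\ref{cor:lower-bound} gives $\minerrS[\Wspace] \geq \minerrS[{\Wspace[\bsa']}] \geq b^{-(sd^2/2 + d(S_s + s/2))}$ for every $d \in \nat$ and every $n < b^{sd}$. I would take $n = b^{sd}-1$, feed this into \eqref{eq:ACPT-conv} (which by hypothesis holds for $p$ with some $A,C,c$, the interval used below forcing $c>0$), take logarithms, and use $\log n \geq sd\log b - \log 2$. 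Dividing through by $d\log b>0$ and absorbing the contributions of $\log C$, of $As$, and of the $O(s^{p-1})$ correction coming from the $\log 2$ term (each of order at most $s$ since $p<2$) into a single $O(s)$, this should yield
\[
S_s \;\geq\; c(\log b)^{p-1}\, s^{p} d^{p-1} \;-\; \tfrac{1}{2}\, s d \;-\; O(s)
\]
for all $s,d \in \nat$.

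The next step is to choose $d$ as a function of $s$ to maximise the gain. Set $r := (p-1)/(2-p)$ and $d := \lceil \kappa s^{r} \rceil$ for a fixed $\kappa$ in the open interval $\bigl(0,\; (2c(\log b)^{p-1})^{1/(2-p)}\bigr)$, say $\kappa := \tfrac12 (2c(\log b)^{p-1})^{1/(2-p)}$. The elementary identities
\[
p + r(p-1) \;=\; 1 + r \;=\; \frac{1}{2-p}
\]
make both $c(\log b)^{p-1}s^{p}d^{p-1}$ and $sd/2$ of exact order $s^{1/(2-p)}$, and the constraint on $\kappa$ is exactly what is needed so that the resulting coefficient $\gamma := c(\log b)^{p-1}\kappa^{p-1} - \kappa/2$ of $s^{1/(2-p)}$ is strictly positive; this positivity rests on $\kappa^{2-p} < 2c(\log b)^{p-1}$, which is where $p<2$ is used. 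Because $1/(2-p)>1$ (this is where $p>1$ is used), the $O(s)$ remainder and the $O(1)$ error produced by rounding $\kappa s^{r}$ to an integer are both negligible against $s^{1/(2-p)}$, so one obtains $S_s \geq (\gamma/2)\, s^{1/(2-p)} = (\gamma/2)\, s^{\,1+r}$ for all sufficiently large $s$.

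It remains to translate a bound on the partial sums into a pointwise bound. By \eqref{eq:weight-increasing} the sequence $(a'_j)_{j\ge 1}$ is nondecreasing, hence $S_s \leq s\, a'_s$, so $a'_s \geq (\gamma/2)\, s^{r}$ for large $s$. In particular $a'_s \to \infty$, which forces $a_s > 0$, and therefore $a_s = a'_s$, for all large $s$. Consequently $\liminf_{j\to\infty} a_j/j^{r} \geq \gamma/2 > 0$. I expect the main obstacle to be the error bookkeeping rather than any single idea: one must verify carefully that the truncation $n = b^{sd}-1$ (versus $b^{sd}$), the additive offsets $\log C$ and $As$, and the integer rounding of $\kappa s^{r}$ each contribute only $o(s^{1/(2-p)})$, which ultimately comes down to the fact that every competing exponent is strictly smaller than $1/(2-p)$ precisely because $1<p<2$.
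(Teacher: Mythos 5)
Your proof is correct, and it rests on the same two pillars as the paper's own argument: the lower bound of Corollary~\ref{cor:lower-bound}, transferred to $\Wspace$ through the embedding $\Wspace[\bsa'] \subset \Wspace$, confronted with the hypothesis \eqref{eq:ACPT-conv} at $n = b^{sd}-1$. Where you differ is in the choice of $d$ and in how the pointwise bound on $a_j$ is extracted. The paper couples the truncation parameter to the unknown weights themselves, taking $\bsa' = \max(\bsa,1)$ and $d = \lfloor a'_s \rfloor$; after bounding $\sum_{j\le s}(a'_j + 1/2)d \le (5/2)s d^2$ by monotonicity, one algebraic rearrangement directly yields $\liminf_{s} a'_s/s^{r} \ge \left(c(\log b)^{p-1}/3\right)^{1/(2-p)} > 0$. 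You instead prescribe $d \asymp \kappa s^{r}$ in terms of the target exponent, first derive the partial-sum bound $\sum_{j\le s} a'_j \gtrsim s^{1+r}$, and only then convert it to $a'_s \gtrsim s^{r}$ via $\sum_{j\le s} a'_j \le s\,a'_s$, which uses \eqref{eq:weight-increasing} just as essentially as the paper does (without monotonicity a partial-sum bound would not give a pointwise liminf). Your exponent identity $p + r(p-1) = 1+r = 1/(2-p)$ and the constraint $\kappa^{2-p} < 2c(\log b)^{p-1}$ are exactly what make the leading coefficient positive, and the error bookkeeping is sound because every correction ($\log C$, $As$, the $\log 2$ term, the rounding of $\kappa s^{r}$) is $O(s) = o(s^{1/(2-p)})$ since $1<p<2$; like the paper you implicitly need $c>0$, which is how \eqref{eq:ACPT-conv} is defined. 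Net comparison: the paper's adaptive choice of $d$ is a bit more direct and produces the explicit constant above, while your route has the mild advantage that the net size is fixed a priori as a power of $s$, at the cost of the extra partial-sum step.
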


\begin{proof}
Let $\bsa' = \max(\bsa, 1)$ and $N = N(s) := b^{s \lfloor a'_s \rfloor} - 1$.
Corollary~\ref{cor:lower-bound} implies
\begin{align*}
\minerrNS[{\Wspace[\bsa']}]
\geq \exp_b\left(-\sum_{j=1}^s\left(\frac{\lfloor a'_s \rfloor^2}{2} + \left(a'_j + \frac{1}{2}\right)\lfloor a'_s \rfloor \right)\right)
\geq \exp_b(-3s\lfloor a'_s \rfloor^2),
\end{align*}
where we use $a'_j + 1/2 \leq a'_s + 1/2 \leq 5\lfloor a'_s \rfloor/2$
in the second inequality.
Combining this with the assumption that \eqref{eq:ACPT-conv} holds for $\Wspace$, 
for all $s$ we have
\[
\exp_b(-3s\lfloor a'_s \rfloor^2)
\leq C \exp(As)  e^{-c (\log{N(s)})^{p}}.
\]
By taking the logarithm and using $b^{(s-1)\lfloor a'_s \rfloor} \leq N(s)$ and $C \leq \exp(Cs)$, we have 
\[
-3s (\log{b}) \lfloor a'_s \rfloor^2
\leq (C + A)s  - c ((s-1)\lfloor a'_s \rfloor\log{b})^{p},
\]
and thus 
\[
\frac{-(C+A)}{s^{p-1}\lfloor a'_s \rfloor^p}  + \frac{(s-1)^p}{s^p}c(\log{b})^{p}
\leq 3 \log{b} \left(\frac{\lfloor a'_s \rfloor}{s^r}\right)^{2-p}. 
\]
holds for all $s$.
Taking the limit inferior as $s$ goes to infinity,
we have
\[
\liminf_{j \to \infty} \frac{a'_j}{j^r} \geq \left(\frac{c(\log{b})^{p-1}}{3}\right)^{1/(2-p)} > 0,
\]
which implies the desired result.
\end{proof}


\section{Upper bounds}\label{sec:upper bounds}
In this section, motivated by \cite{Matsumoto2013eoh} and its generalization \cite{SuzukiMW},
we prove the existence of good QMC algorithms
which achieve super-polynomial convergence and tractability
in Sections~\ref{sec-upper-conv} and \ref{sec-upper-trac},
respectively.
Such QMC algorithms are given by digital nets.
Digital nets are point sets which have the structure
of a $\Zb$-module introduced by Niederreiter, see for instance \cite{Niederreiter1992rng}.
We introduce the notion of digital nets in the following subsection.

\subsection{Digital nets}
For a positive integer $m$ and a non-negative integer $k$ with its $b$-adic expansion
$k = \sum_{i=1}^\infty \kappa_{i}b^{i-1}$,
we define the $m$-digit truncated vector $\tr_m(k) \in \Zb^m$
as $\tr_m(k) = (\kappa_1, \kappa_2, \dots, \kappa_m)^{\top}$.

\begin{definition}
Let $G_1, \dots, G_s\in \bZ_b^{l\times d}$ be $l \times d$ matrices
over $\bZ_b$ with $d \leq l$.
Let $0\le k<b^d$. 
For $1\leq j \leq s$ and $1 \leq i \leq l$, define $y_{i,k,j} \in \Zb$ as
\[
(y_{1,k,j},\dots,y_{l,k,j})^{\top} = G_j\tr_d(k),
\]
where the matrix vector multiplication is over $\Zb$.
Then we define
  \begin{align*}
    x_{k,j}=\frac{y_{1,k,j}}{b}+\frac{y_{2,k,j}}{b^2}+\dots+\frac{y_{l,k,j}}{b^l} \in [0,1)
  \end{align*}
for $1\le j\le s$.
In this way we obtain the $k$-th point $\bsx_k=(x_{k,1}, \dots, x_{k,s})$.
We define $P=P(G_1, \dots, G_s) := \{\bsx_0,\dots,\bsx_{b^d-1}\}$
($P$ is considered as a multiset) and call it a {\it digital net over $\Zb$ with precision $l$}, or simply a {\it digital net}.
\end{definition}

The {\em dual net} of a digital net, which is defined as follows,
plays an important role in the subsequent analysis.
\begin{definition}\label{def:dual_net}
Let $P=P(G_1, \dots, G_s)$
be a digital net over $\Zb$ with precision $l$.
The dual net of $P$,
denoted by $P^\perp = P^\perp(G_1, \dots, G_s)$, is defined as
\begin{align*}
P^\perp :=
\{\bsk = (k_1, \dots, k_s) \in \natzero^s \mid
G_1^{\top} \tr_l(k_1) + \cdots + G_s^{\top} \tr_l(k_s) = 0\}.
\end{align*}
\end{definition}

The next lemma, which is a slight generalization of \cite[Lemma~4.75]{Dick2010dna} to our context, connects a digital net with Walsh functions.
\begin{lemma}\label{lem:dual_Walsh}
Let $P$ be a digital net over $\Zb$ and $P^\perp$ its dual net. Then we have
  \begin{align*}
     |P|^{-1}\sum_{\bsx \in P}\wal_{\bsk}(\bsx) = 
\begin{cases}
     1 & \text{if} \ \bsk \in P^\perp,  \\
     0 & \text{otherwise} . 
    \end{cases} 
  \end{align*}
\end{lemma}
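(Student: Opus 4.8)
The plan is to prove Lemma~\ref{lem:dual_Walsh} by a direct computation, exploiting the $\Zb$-module (in fact, $\Zb$-linear) structure of a digital net. First I would unwind the definitions: if $P = P(G_1,\dots,G_s)$ has precision $l$ and $\bsx_k = (x_{k,1},\dots,x_{k,s})$ is its $k$-th point, then by the definition of the Walsh function in base $b$ together with $x_{k,j} = \sum_{i=1}^l y_{i,k,j} b^{-i}$ and $(y_{1,k,j},\dots,y_{l,k,j})^\top = G_j \tr_d(k)$, one has
\[
\wal_{\bsk}(\bsx_k) = \omega_b^{\,\sum_{j=1}^s \tr_l(k_j)^\top G_j \tr_d(k)}
= \omega_b^{\,\bigl(\sum_{j=1}^s G_j^\top \tr_l(k_j)\bigr)^\top \tr_d(k)},
\]
where all arithmetic in the exponent is carried out in $\Zb$ (this is where the definition $\wal_k(x) = \omega_b^{\kappa_1\xi_1 + \cdots}$ and $\omega_b = \exp(2\pi\sqrt{-1}/b)$ enter, the latter ensuring $\omega_b^m$ depends only on $m \bmod b$). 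Writing $\bst(\bsk) := \sum_{j=1}^s G_j^\top \tr_l(k_j) \in \Zb^l$, we thus get $\wal_{\bsk}(\bsx_k) = \omega_b^{\bst(\bsk)^\top \tr_d(k)}$.

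Next I would sum over the net. Since $|P| = b^d$ and $k$ ranges over $0 \le k < b^d$, the truncated vectors $\tr_d(k)$ run exactly once over all of $\Zb^d$, so
\[
|P|^{-1} \sum_{\bsx \in P} \wal_{\bsk}(\bsx)
= b^{-d} \sum_{\bsv \in \Zb^d} \omega_b^{\,\bst(\bsk)^\top \bsv}
= \prod_{i=1}^d \left( b^{-1} \sum_{v_i \in \Zb} \omega_b^{\,t_i(\bsk) v_i} \right),
\]
where $t_i(\bsk)$ is the $i$-th coordinate of $\bst(\bsk)$ for $1 \le i \le d$. The inner character sum over the cyclic group $\Zb$ equals $1$ if $t_i(\bsk) \equiv 0 \pmod b$ and $0$ otherwise, so the whole product is $1$ exactly when $t_i(\bsk) = 0$ in $\Zb$ for every $i = 1,\dots,d$, and $0$ otherwise. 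It remains to check that this condition matches membership in $P^\perp$. Since the $G_j$ are $l \times d$ matrices, $G_j^\top$ is $d \times l$ and $\bst(\bsk) = \sum_j G_j^\top \tr_l(k_j) \in \Zb^d$; comparing with Definition~\ref{def:dual_net}, the defining equation $G_1^\top \tr_l(k_1) + \cdots + G_s^\top \tr_l(k_s) = 0$ is precisely $\bst(\bsk) = 0$ in $\Zb^d$, i.e.\ $t_i(\bsk) = 0$ for all $1 \le i \le d$. (A small bookkeeping remark: in Definition~\ref{def:dual_net} the sum is written with $\tr_l(k_j)$, which is consistent with $G_j^\top$ having $l$ columns; one should just confirm the indices $d$ versus $l$ are used consistently, matching the transpose conventions.) This gives exactly the claimed dichotomy.

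The argument is essentially routine; the only genuinely delicate point is the very first step, namely correctly identifying the exponent of $\omega_b$ in $\wal_{\bsk}(\bsx_k)$ as the $\Zb$-bilinear pairing $\tr_l(\bsk)^\top G \tr_d(k)$. One has to be careful that the digits $y_{i,k,j}$ produced by the matrix multiplication over $\Zb$ are genuinely the base-$b$ digits used to form $x_{k,j}$ (they lie in $\{0,\dots,b-1\}$, so the representation $x_{k,j} = \sum_i y_{i,k,j} b^{-i}$ is the canonical one unless all trailing digits are $b-1$, an edge case that does not affect the value of the Walsh function since $\wal_{k_j}$ only reads the first $l$ digits and $y_{i,k,j}$ for $i > l$ are zero), and that the matrix identity $(y_{1,k,j},\dots,y_{l,k,j})^\top = G_j\tr_d(k)$ lets us rewrite $\sum_i \kappa_{j,i} y_{i,k,j}$ (with $\kappa_{j,i}$ the digits of $k_j$) as $\tr_l(k_j)^\top G_j \tr_d(k)$. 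Once this is in place, everything else is the standard orthogonality-of-characters computation on $\Zb^d$, and the identification with $P^\perp$ is immediate from the definitions. I would present this as a short direct proof of a few lines, citing \cite[Lemma~4.75]{Dick2010dna} for the template and noting only that the generalization to rectangular $G_j$ and arbitrary precision $l$ changes nothing essential.
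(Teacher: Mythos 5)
Your proof is correct, and it is essentially the argument the paper relies on: the paper gives no proof of Lemma~\ref{lem:dual_Walsh}, deferring to \cite[Lemma~4.75]{Dick2010dna}, whose proof is exactly this character-orthogonality computation on $\Zb^d$ (your handling of the truncations $\tr_l(k_j)$, $\tr_d(k)$ and of the multiset structure of $P$ is what makes the ``slight generalization'' go through). The only blemish is the momentary claim that $\bst(\bsk)\in\Zb^l$, which you correctly replace by $\Zb^d$ later in the same paragraph.
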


From now on, we consider integration defined over $\mWspace$.
We use QMC algorithms over digital nets.
That is, for a digital net $P$, we use
$P(f) := |P|^{-1} \sum_{\bsx \in P} f(\bsx)$,
where we identify the digital net $P$ and the QMC algorithm on $P$.
The (signed) integration error of $f \in \mWspace$ by $P$ is calculated as
\begin{align*}
|P|^{-1} \sum_{\bsx \in P} f(\bsx) - I(f)
&= |P|^{-1} \sum_{\bsx \in P} \sum_{\bsk \in \natzero^s}\widehat{f}(\bsk) \wal_{\bsk}(\bsx)  - I(f)\\
&= \sum_{\bsk \in \natzero^s}\widehat{f}(\bsk) |P|^{-1}\sum_{\bsx \in P}\wal_{\bsk}(\bsx)  - I(f)\\
&= \sum_{\bsk \in P^\perp}\widehat{f}(\bsk) - \widehat{f}(0)\\
&= \sum_{\bsk \in P^\perp \backslash\{0\}}\widehat{f}(\bsk).
\end{align*}
Hence we have
\begin{equation}\label{eq:err-of-dig-net}
\left||P|^{-1} \sum_{\bsx \in P} f(\bsx) - I(f)\right|
\leq \sum_{\bsk \in P^\perp \backslash\{0\}}|\widehat{f}(\bsk)|
\leq \mWnorm{f} \sum_{\bsk \in P^\perp \backslash\{0\}} b^{-\mDwt{\bsk}}.
\end{equation}

\begin{remark}\label{rem:WAFOM}
WAFOM, a criterion for digital nets, is defined as a truncated version of the sum
on the rightmost side of \eqref{eq:err-of-dig-net} for $\bsa = 0$
in \cite{Matsumoto2014acf,SuzukiMW}
and for $\bsa = 1$ in \cite{Y, Harase2015qmc}.
Note that $\mDwt{\bsk} = \Dwt{\bsk}$ in these cases.
Thus the sum (and the sum with $\mDwt{\bsk}$ replaced by $\Dwt{\bsk}$)
can be regarded as an untruncated version of WAFOM
generalized by weights $\bsa$,
which has not been considered as far as the author knows. 
We can say that $\Sspace$, $\Wspace$ and $\mWspace$ are function spaces
for which WAFOM is a suitable quality criterion.
\end{remark}

We now define the minimal weight of $P^\perp$ by
\begin{equation*}
\delta_{P^\perp} := \inf_{\bsk \in P^\perp \backslash \{0\}}{\mDwt{\bsk}}.
\end{equation*}
Then the rightmost side of \eqref{eq:err-of-dig-net}
is bounded by
$
\mWnorm{f} \sum_{\bsk} b^{-\mDwt{\bsk}},
$
where the sum is extended over 
all $\bsk \in \natzero^s$ with $\mDwt{\bsk} \geq \delta_{P^\perp}$.
This argument implies the following lemma.
\begin{lemma}\label{lem:werr-of-nets}
Let $P$ be a digital net. Then we have
\begin{equation}\label{eq:minDwt-bounds-err}
\worsterr(P, \mWspace)
\leq \sum_{\substack{\bsk \in \natzero^s \\ \mDwt{\bsk} \geq \delta_{P^\perp}}} b^{-\mDwt{\bsk}}.
\end{equation}
\end{lemma}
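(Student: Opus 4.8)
\medskip

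The plan is as follows. The starting point is the chain of inequalities already established in \eqref{eq:err-of-dig-net}, which shows that for \emph{any} $f \in \mWspace$ with $\mWnorm{f} \le 1$,
\[
\left||P|^{-1} \sum_{\bsx \in P} f(\bsx) - I(f)\right|
\le \sum_{\bsk \in P^\perp \backslash\{0\}} |\widehat f(\bsk)|
\le \sum_{\bsk \in P^\perp \backslash\{0\}} b^{-\mDwt{\bsk}},
\]
where the last step uses $|\widehat f(\bsk)| \le \mWnorm{f}\, b^{-\mDwt{\bsk}} \le b^{-\mDwt{\bsk}}$. First I would note that $P(f) = |P|^{-1}\sum_{\bsx\in P}f(\bsx)$ is a QMC algorithm using $|P|$ function values, hence by definition $\worsterr(P,\mWspace)$ is the supremum over the unit ball of $\mWspace$ of the left-hand side. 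Taking that supremum in the displayed inequality gives $\worsterr(P,\mWspace) \le \sum_{\bsk \in P^\perp \backslash\{0\}} b^{-\mDwt{\bsk}}$.

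\medskip

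The remaining task is to replace the sum over $P^\perp\backslash\{0\}$ by the (larger, but more tractable) sum over all $\bsk \in \natzero^s$ with $\mDwt{\bsk} \ge \delta_{P^\perp}$. This is a pure domination step: by the definition $\delta_{P^\perp} = \inf_{\bsk \in P^\perp\backslash\{0\}} \mDwt{\bsk}$, every $\bsk \in P^\perp\backslash\{0\}$ satisfies $\mDwt{\bsk} \ge \delta_{P^\perp}$, so $P^\perp\backslash\{0\} \subseteq \{\bsk \in \natzero^s : \mDwt{\bsk} \ge \delta_{P^\perp}\}$; since every summand $b^{-\mDwt{\bsk}}$ is nonnegative, enlarging the index set only increases the sum, yielding \eqref{eq:minDwt-bounds-err}. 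One should also check that the right-hand sum is finite so the bound is meaningful — but this is immediate from \eqref{eq:power-series-modified} with $X = b^{-1} \in (0,1)$, which shows $\sum_{\bsk \in \natzero^s} b^{-\mDwt{\bsk}} < \infty$, and the sum in the lemma is a subsum of this.

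\medskip

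I do not anticipate a genuine obstacle here: the lemma is essentially a bookkeeping consequence of \eqref{eq:err-of-dig-net} together with the definition of $\delta_{P^\perp}$. The only point requiring a word of care is the passage from the pointwise error bound to the worst-case error, namely that the algorithm $P$ is admissible (it uses exactly $|P|$ function evaluations at the points of the digital net, with the averaging map as $\phi_{n,s}$), so that $\worsterr(P,\mWspace)$ genuinely dominates the left side of \eqref{eq:err-of-dig-net} after taking the supremum; this is routine. Thus the proof is a short two-line argument: take the supremum in \eqref{eq:err-of-dig-net}, then enlarge the summation domain from $P^\perp\backslash\{0\}$ to $\{\bsk : \mDwt{\bsk} \ge \delta_{P^\perp}\}$.
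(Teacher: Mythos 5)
Your argument is correct and coincides with the paper's: both start from the error bound \eqref{eq:err-of-dig-net}, enlarge the summation set from $P^\perp\backslash\{0\}$ to $\{\bsk\in\natzero^s : \mDwt{\bsk}\ge\delta_{P^\perp}\}$ via the definition of $\delta_{P^\perp}$, and pass to the worst-case error by taking the supremum over the unit ball of $\mWspace$. The only addition you make is the (harmless) remark on finiteness of the right-hand sum via \eqref{eq:power-series-modified}, which the paper leaves implicit.
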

The right-hand side of \eqref{eq:minDwt-bounds-err}
will be evaluated in the following sections.

We now prove a lemma which gives the existence of digital nets
whose minimal weight is large,
which generalizes \cite[Proposition~2]{Matsumoto2013eoh} and \cite[Proposition~4]{SuzukiMW},
First we define
\[
\mvol{M}
:= |\{\bsk \in \natzero^s \mid \mDwt{\bsk} \leq M\}|.
\]

\begin{lemma}\label{lem:exist-min-wt}
Let $M$ be a real number and
${\rho_b}$ be the smallest prime factor of $b$.
Let $d, l$ be positive integers with $l \geq M - a_1 - 1$.
If $\mvol{M} \leq {\rho_b}^d$ holds,
then there exists a digital net $P$ over $\Zb$ with precision $l$ 
satisfying $|P| = b^d$  and $\delta_{P^\perp} \geq M$.
\end{lemma}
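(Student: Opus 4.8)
The plan is to run a probabilistic (equivalently, averaging) argument over the choice of generating matrices $G_1,\dots,G_s\in\Zb^{l\times d}$, generalizing \cite[Proposition~2]{Matsumoto2013eoh} and \cite[Proposition~4]{SuzukiMW}; the one genuinely new point is that $\Zb$ is only a ring when $b$ is composite, which is where the smallest prime factor $\rho_b$ enters. Set $B := \{\bsk\in\natzero^s\setminus\{0\}\mid \mDwt{\bsk}<M\}$. Since $\delta_{P^\perp}\geq M$ means exactly that $P^\perp\cap B=\emptyset$, it suffices to exhibit $G_1,\dots,G_s$ with that property.

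First I would use the hypothesis $l\geq M-a_1-1$. By \eqref{eq:weight-increasing} we have $a_j\geq a_1$ for all $j$, and $\max(i+a_j,1)\geq i+a_j$, so if some $k_j$ had a nonzero digit at a position $i\geq l+1$, then $\mDwt{\bsk}\geq i+a_j\geq l+1+a_1\geq M$. Hence every $\bsk\in B$ satisfies $k_j<b^l$ for all $j$, so $\bsk$ is recorded completely by $(\tr_l(k_1),\dots,\tr_l(k_s))$; by Definition~\ref{def:dual_net}, $\bsk\in P^\perp$ if and only if $\sum_{j=1}^s G_j^\top\tr_l(k_j)=0$ in $\Zb^d$; and $\bsk\neq 0$ forces $\tr_l(k_{j_0})\neq 0$ for some $j_0$.

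The technical core is the estimate: for a fixed nonzero $\bsv\in\Zb^l$, a uniformly random $G\in\Zb^{l\times d}$, and any fixed $\bsw\in\Zb^d$, one has $\prob[G^\top\bsv=\bsw]\leq\rho_b^{-d}$. I would prove it coordinate by coordinate: writing $(G^\top\bsv)_m=\sum_i G_{i,m}v_i$ and fixing $i_0$ with $v_{i_0}\neq 0$, conditioning on the entries $G_{i,m}$ with $i\neq i_0$ reduces the $m$-th coordinate to solving $G_{i_0,m}v_{i_0}=(\text{fixed})$; the homomorphism $x\mapsto xv_{i_0}$ on $\Zb$ has image of order $b/\gcd(v_{i_0},b)$ with all fibres of equal size, so the conditional probability is $\gcd(v_{i_0},b)/b\leq 1/\rho_b$, since $b/\gcd(v_{i_0},b)$ is a divisor of $b$ larger than $1$ and hence at least $\rho_b$; as the $d$ columns of $G$ are independent, this gives $\rho_b^{-d}$. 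Applying this to $\bsv=\tr_l(k_{j_0})$, after conditioning on $\{G_j:j\neq j_0\}$, yields $\prob[\bsk\in P^\perp]\leq\rho_b^{-d}$ for each fixed $\bsk\in B$.

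Finally, a union bound gives $\prob[P^\perp\cap B\neq\emptyset]\leq |B|\,\rho_b^{-d}$. Since $\mDwt{0}=0$, we have $|B|\leq(\mvol{M}-1)_+$, which by the hypothesis $\mvol{M}\leq\rho_b^d$ is strictly less than $\rho_b^d$; hence this probability is $<1$, so some choice of $G_1,\dots,G_s$ works, and the associated digital net $P$ over $\Zb$ with precision $l$ satisfies $|P|=b^d$ and $\delta_{P^\perp}\geq M$. I expect the $\gcd$ estimate to be the only delicate point, as it replaces the rank argument available only when $b$ is prime; everything else is bookkeeping.
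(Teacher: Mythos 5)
Your proof is correct and follows essentially the same route as the paper: an averaging/union-bound argument over uniformly random generating matrices $(G_1,\dots,G_s)$, with the hypothesis $l\geq M-a_1-1$ used to ensure that every low-weight nonzero $\bsk$ has $k_j<b^l$ (the paper handles this as a separate final step, you fold it in up front). The only difference is cosmetic: you phrase the count probabilistically and spell out the $\gcd$-based bound $\prob[G^\top\bsv=\bsw]\leq\rho_b^{-d}$, which the paper states without proof.
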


\begin{proof}
Let $G_1, \dots G_s \in \Zb^{l \times d}$ be matrices.
Recall
\[
\bsk \in P^\perp(G_1, \dots, G_s)
\iff G_1^{\top} \tr_l(k_1) + \cdots + G_s^{\top} \tr_l(k_s) = 0.
\]
Thus, for a given $0 \neq \bsk \in \bN_0^s$ with $k_j < b^l$ for all $j$, we have
\[
|\{(G_1, \dots, G_s) \in (\Zb^{l \times d})^s
\mid \bsk \in P^\perp(G_1, \dots, G_s) \}|
\leq b^{sdl}/{\rho_b}^d,
\]
where ${\rho_b}$ is the smallest prime factor of $b$.
Hence we have
\begin{align*}
&|\{(G_1, \dots, G_s) \in (\Zb^{l \times d})^s \mid
\min_{\substack{\bsk \in P^\perp(G_1, \dots, G_s)\backslash\{0\} \\ k_j<b^l \, \forall j}} \mDwt{\bsk} > M\}|\\
&= b^{sdl} - |\{(G_1, \dots, G_s) \in (\Zb^{l \times d})^s \mid
\min_{\substack{\bsk \in P^\perp(G_1, \dots, G_s)\backslash\{0\} \\ k_j<b^l \, \forall j}} \mDwt{\bsk} \leq M\}|\\
&\geq b^{sdl} - \sum_{\substack{\bsk \neq 0, \,\, k_j<b^l \, \forall j \\ \mDwt{\bsk} \leq M}}
|\{(G_1, \dots, G_s) \in (\Zb^{l \times d})^s \mid
\bsk \in P^\perp(G_1, \dots, G_s) \}|\\
&> b^{sdl} - \mvol{M} b^{sdl}/{\rho_b}^d.
\end{align*}
Thus, if  $\mvol{M} \leq {\rho_b}^d$ holds,
there exists $(G_1, \dots, G_s) \in (\Zb^{l \times d})^s$ with
\begin{equation}\label{eq:exist-min-wt-step1}
\inf_{\substack{\bsk \in P^\perp(G_1, \dots, G_s)\backslash\{0\} \\ k_j < b^l \, \forall j}}  \mDwt{\bsk} \geq M.
\end{equation}
Furthermore, from the assumption $l \geq M - a_1 - 1$ we have
\begin{equation}\label{eq:exist-min-wt-step2}
\min\{\mDwt{\bsk} \mid \bsk \in \natzero^s, k_j \geq b^l \,\, \exists j\}
\geq \max(1, a_1 + l + 1)
\geq M.
\end{equation}
Combining \eqref{eq:exist-min-wt-step1} and \eqref{eq:exist-min-wt-step2},
we obtain the result.
\end{proof}

\subsection{Super-polynomial convergence results}\label{sec-upper-conv}
In this subsection, we prove super-polynomial convergence
for $\Sspace$, $\Wspace$ and $\mWspace$.
Taking \eqref{eq:S-W-embed} and \eqref{eq:W-mW-embed} into account, we have only to consider $\mWspace$.

First we prove a bound on $\mvol{M}$ along \cite[Exercise~3(b), p.332]{Matouvsek1998itd}
and its modifications \cite{Matsumoto2013eoh, SuzukiMW},
which treat the case of $\bsa = 0$.
Since $\mvol{M} \leq 1$ holds if $M < 1$, we assume that $M \geq 1$.
We have
\begin{align*}
\mvol{M}
= \sum_{\substack{\bsk \in \natzero^s \\ \mDwt{\bsk}\leq M}} 1
\leq \sum_{\substack{\bsk \in \natzero^s \\ \mDwt{\bsk}\leq M}} X^{\mDwt{\bsk} - M}
\leq \sum_{\bsk \in \natzero^s} X^{\mDwt{\bsk} - M}
\end{align*}
for all $X \in (0,1)$,
and the right-most expression is equal to
$\prod_{j=1}^s \prod_{i=1}^\infty (1+(b-1)X^{\max(i+a_j, 1)})/X^M$
from \eqref{eq:power-series}.
By taking the logarithm on both sides
and using the well-known inequality $\log(1+X) \leq X$,
for all $X \in (0,1)$ we have
\begin{align}\label{eq:vol-firststep}
\log \mvol{M}
&\leq \sum_{j=1}^s \sum_{i=1}^\infty (b-1)X^{\max(i+a_j, 1)} + M \log{X^{-1}}.
\end{align}

We proceed to bound $\sum_{i=1}^\infty X^{\max(i+a_j, 1)}$.
If $a_j \geq 0$, it is equal to $X^{a_j+1}/(1-X)$.
Otherwise, we have
\begin{align*}
\sum_{i=1}^\infty X^{\max(i+a_j, 1)}
&= \sum_{i \colon i + a_j \leq 1} X^1 + \sum_{i \colon i+a_j > 1} X^{i+a_j}\\
&\leq \sum_{i \colon i + a_j \leq 1} 1 + \sum_{i'=1}^\infty X^{i'}\\
&= n_j + X/(1-X),
\end{align*}
where $n_j := |\NegSet{j}| = |\{i \in \nat \mid i + a_j \leq 1\}|$.
Thus, in both cases, we obtain
\[
\sum_{i=1}^\infty X^{\max(i+a_j, 1)}
\leq n_j + \frac{X}{1-X} \min(X^{a_j}, 1).
\]
Applying this inequality to \eqref{eq:vol-firststep}, we have 
\begin{align}
\log \mvol{M}
&\leq (b-1)\sum_{j=1}^s \left(n_j + \frac{X}{1-X} \min(X^{a_j}, 1)\right) + M \log{X^{-1}} \notag \\
&\leq (b-1)\sum_{j=1}^s \left(n_j + (\log{X^{-1}})^{-1} \min(X^{a_j}, 1)\right) + M \log{X^{-1}} \label{eq:vol-secondstep}.
\end{align}

Putting $X = 1/\exp(\sqrt{(b-1)s/M})$
and using $\min(X^{a_j}, 1) \leq 1$, we obtain
\begin{align*}
\log \mvol{M}
\leq \SumSet + 2\sqrt{(b-1)sM},
\end{align*}
where we define $\SumSet := (b-1)\sum_{j=1}^s n_j$.
We have thus proved the following.

\begin{lemma}\label{lem:vol-conv}
For all $M \geq 0$ we have
\[
\mvol{M}
\leq \exp \left(\SumSet + 2\sqrt{(b-1)sM}\right).
\]
\end{lemma}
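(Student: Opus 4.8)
The plan is to estimate the counting function $\mvol{M}$ by a generating-function (Chernoff-type) bound with a free parameter $X\in(0,1)$, to evaluate the resulting series in closed form via \eqref{eq:power-series-modified}, take logarithms, and finally optimise over $X$. Since $\mDwt{\bsk}$ is a nonnegative combination of the quantities $\max(i+a_j,1)\ge 1$, we have $\mvol{M}=1$ for every $M<1$, while the asserted bound is always at least $1$; hence we may assume $M\ge 1$ throughout.

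First I would note that for $X\in(0,1)$ every $\bsk$ with $\mDwt{\bsk}\le M$ satisfies $X^{\mDwt{\bsk}-M}\ge 1$, so that
\[
\mvol{M}\ \le\ \sum_{\bsk\in\natzero^s}X^{\mDwt{\bsk}-M}\ =\ X^{-M}\prod_{j=1}^{s}\prod_{i=1}^{\infty}\bigl(1+(b-1)X^{\max(i+a_j,1)}\bigr),
\]
where the last equality is \eqref{eq:power-series-modified}. Taking logarithms and using $\log(1+t)\le t$ gives
\[
\log\mvol{M}\ \le\ (b-1)\sum_{j=1}^{s}\sum_{i=1}^{\infty}X^{\max(i+a_j,1)}\ +\ M\log X^{-1}.
\]

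Next I would bound each inner sum. Splitting the indices according to whether $i+a_j\le 1$: the $n_j=|\NegSet{j}|$ ``negative'' indices each contribute $X\le 1$, while the remaining indices are dominated by $\sum_{i'\ge 1}X^{i'}=X/(1-X)$ (and if $a_j\ge 0$ there are no negative indices and the tail equals $X^{a_j+1}/(1-X)$). This yields $\sum_{i}X^{\max(i+a_j,1)}\le n_j+\frac{X}{1-X}\min(X^{a_j},1)$. Using the elementary estimates $\frac{X}{1-X}\le(\log X^{-1})^{-1}$ (equivalently $t\le e^{t}-1$ with $t=\log X^{-1}$) and $\min(X^{a_j},1)\le 1$, one reaches \eqref{eq:vol-secondstep} and then, writing $\SumSet=(b-1)\sum_{j=1}^{s}n_j$,
\[
\log\mvol{M}\ \le\ \SumSet\ +\ \frac{(b-1)s}{\log X^{-1}}\ +\ M\log X^{-1}.
\]
Finally I would minimise $u\mapsto (b-1)s/u+Mu$ over $u=\log X^{-1}>0$; the minimum is attained at $u=\sqrt{(b-1)s/M}$, i.e.\ at $X=\exp(-\sqrt{(b-1)s/M})\in(0,1)$, and equals $2\sqrt{(b-1)sM}$, which gives the claim.

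I do not expect a real obstacle here: the whole argument is a short computation. The one point that needs care is the bookkeeping for the indices $i$ with $i+a_j\le 1$ --- these arise only when the weights $a_j$ are negative and are exactly what forces the additive constant $\SumSet$ into the bound --- so they must be tracked consistently through all the estimates; the two elementary inequalities used and the verification that the optimal $X$ lies in $(0,1)$ for $M\ge 1$ are immediate.
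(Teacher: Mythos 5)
Your proof is correct and follows essentially the same route as the paper: the Chernoff-type bound with parameter $X\in(0,1)$, evaluation via \eqref{eq:power-series-modified}, the estimates $\log(1+t)\le t$, $\sum_i X^{\max(i+a_j,1)}\le n_j+\frac{X}{1-X}\min(X^{a_j},1)$ and $\frac{X}{1-X}\le(\log X^{-1})^{-1}$, followed by the choice $X=\exp(-\sqrt{(b-1)s/M})$ (which is exactly the minimizer you identify). The only difference is cosmetic: you phrase the last step as an explicit optimization and spell out the trivial case $M<1$, which the paper handles by simply assuming $M\ge 1$.
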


We note that Lemma~\ref{lem:vol-conv} and the fact that $\mvol{M} \leq 1$ if $M < 1$
implies
\begin{equation} \label{eq:vol-conv2}
\mvol{M}
\leq \exp \left(\left(\SumSet + 2\sqrt{(b-1)s}\right)\sqrt{M}\right).
\end{equation}

Now we give a bound on
the right-hand side of \eqref{eq:minDwt-bounds-err}.
From Lemma~\ref{lem:vol-conv} we have
\begin{align}
\sum_{\substack{\bsk \in \natzero^s \\ \mDwt{\bsk} \geq M}} b^{-\mDwt{\bsk}}
&\leq \sum_{i=0}^\infty \sum_{\substack{\bsk \in \natzero^s \\ M+i \leq \mDwt{\bsk} < M+i+1}} b^{-(M+i)} \notag \\
&\leq \sum_{i=0}^\infty \vol_{s, \bsa}(M+i+1) b^{-(M+i)} \notag \\
&\leq \sum_{i=0}^\infty \exp\left(\SumSet + 2\sqrt{(b-1)s(M+i+1)}\right) b^{-(M+i)} \label{eq:conv-bd}
\end{align}
for all $M \geq 0$.
We can easily check
$
\sqrt{x}
\leq x/(2\sqrt{B}) + \sqrt{B}/2
$
for all $x, B \geq 0$.
Applying this inequality with $x = M + i + 1$,
the right-hand side of \eqref{eq:conv-bd} is bounded by
\begin{align*}
& \sum_{i=0}^\infty \exp\left(\SumSet + \sqrt{(b-1)s/B}(M+i+1) + \sqrt{(b-1)sB}\right) b^{-(M+i)}\\
&= b\exp\left(\SumSet + \sqrt{(b-1)sB} \right) \sum_{i=0}^\infty \exp\left(\left(\sqrt{(b-1)s/B} - \log{b}\right)(M+i+1) \right).
\end{align*}
Taking $B$ as 
$
\sqrt{(b-1)s/B} = (\log{b})/2,
$
we obtain a bound on the right-hand side of \eqref{eq:minDwt-bounds-err} by
\begin{align*}
\sum_{\substack{\bsk \in \natzero^s \\ \mDwt{\bsk} \geq M}} b^{-\mDwt{\bsk}}
\leq \Cconv \exp(-(\log{b})M/2),
\end{align*}
where the positive constant $\Cconv$ is defined by
\begin{align*}
\Cconv
&= \exp\left(\SumSet + (\log{b})/2 + 2(b-1)s/\log{b}\right)
(1-\exp(-(\log{b})/2))^{-1}.
\end{align*}
Hence Lemma~\ref{lem:werr-of-nets} implies the following lemma.
\begin{lemma}\label{thm:minwt-to-WAFOM-conv}
Let $P$ be a digital net.
Then we have
\[
\worsterr(P, \mWspace)
\leq \Cconv \exp(-\delta_{P^\perp}(\log{b})/2).
\]
\end{lemma}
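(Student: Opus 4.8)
The plan is to combine the error bound of Lemma~\ref{lem:werr-of-nets} with the volume estimate of Lemma~\ref{lem:vol-conv}. By Lemma~\ref{lem:werr-of-nets} it suffices to prove that for every real number $M \geq 0$ one has
\[
\sum_{\substack{\bsk \in \natzero^s \\ \mDwt{\bsk} \geq M}} b^{-\mDwt{\bsk}} \leq \Cconv \exp(-(\log b) M / 2),
\]
and then to specialize this to $M = \delta_{P^\perp}$.

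To estimate the tail sum I would split the range $\mDwt{\bsk} \geq M$ into the unit-width shells $M + i \leq \mDwt{\bsk} < M + i + 1$ for $i = 0, 1, 2, \dots$. On the $i$-th shell $b^{-\mDwt{\bsk}} \leq b^{-(M+i)}$, while the number of $\bsk \in \natzero^s$ in that shell is at most $\mvol{M+i+1}$; hence Lemma~\ref{lem:vol-conv} yields
\[
\sum_{\substack{\bsk \in \natzero^s \\ \mDwt{\bsk} \geq M}} b^{-\mDwt{\bsk}} \leq \sum_{i=0}^\infty \exp\bigl(\SumSet + 2\sqrt{(b-1)s(M+i+1)}\bigr)\, b^{-(M+i)}.
\]
The only genuine difficulty is that the exponent $2\sqrt{(b-1)s(M+i+1)}$, although sublinear, is unbounded in $i$, so this series is not yet visibly geometric. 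I would remedy this by linearizing the square root via the elementary inequality $\sqrt{x} \leq x/(2\sqrt{B}) + \sqrt{B}/2$, valid for all $x, B \geq 0$, applied with $x = M+i+1$; this replaces $2\sqrt{(b-1)s(M+i+1)}$ by the affine-in-$i$ quantity $\sqrt{(b-1)s/B}\,(M+i+1) + \sqrt{(b-1)sB}$.

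The decisive step is the choice of $B$, namely fixing $B$ by $\sqrt{(b-1)s/B} = (\log b)/2$, equivalently $B = 4(b-1)s/(\log b)^2$. With this choice the affine-in-$i$ part of the exponent combines with $b^{-(M+i)} = \exp(-(\log b)(M+i))$ to leave a net exponent $-(\log b)(M+i)/2$ up to an additive constant, so the series becomes geometric with ratio $\exp(-(\log b)/2) < 1$ and can be summed in closed form. Collecting the $i$-independent factors (an $\exp(\SumSet)$ from the volume bound, an $\exp(\sqrt{(b-1)sB}) = \exp(2(b-1)s/\log b)$ from the linearization, an $\exp((\log b)/2)$ from the index shift, and the geometric-series factor $(1-\exp(-(\log b)/2))^{-1}$) reproduces exactly the constant $\Cconv$ displayed before the lemma and yields the claimed tail bound. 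Taking $M = \delta_{P^\perp}$ and invoking Lemma~\ref{lem:werr-of-nets} finishes the proof. I expect no real obstacle beyond careful bookkeeping of these constants; all the substance lies in the balancing choice of $B$ (equivalently, the choice $X = \exp(-\sqrt{(b-1)s/M})$ already used to prove Lemma~\ref{lem:vol-conv}).
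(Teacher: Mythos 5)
Your proposal is correct and follows essentially the same route as the paper: the same unit-width shell decomposition of the tail sum bounded via $\mvol{M+i+1}$ and Lemma~\ref{lem:vol-conv}, the same linearization $\sqrt{x}\leq x/(2\sqrt{B})+\sqrt{B}/2$, and the same balancing choice $\sqrt{(b-1)s/B}=(\log b)/2$, with the constants assembling into exactly the paper's $\Cconv$. Nothing further is needed.
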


Put
$\Cconvhelp := \SumSet + 2\sqrt{(b-1)s}$.
From \eqref{eq:vol-conv2}, the condition of Lemma~\ref{lem:exist-min-wt}
is satisfied if 
$\exp(\Cconvhelp \sqrt{M}) \leq \rho_b^d$,
which is equivalent to
$M \leq (d\log{\rho_b}/\Cconvhelp)^2$.
Therefore the following bound on the worst-case error follows
from Lemmas \ref{lem:exist-min-wt} and \ref{lem:vol-conv}.

\begin{theorem}\label{thm:existence-goodnet-conv}
Let $d$ be a positive integer.
Then there exists a digital net $P$ over $\Zb$ with precision $l$
with $|P| = b^d$ and $l \geq (\log{\rho_b}/\Cconvhelp)^2d^2-1-a_1$ such that
\begin{align}\label{eq:conv-theorem}
\worsterr(P, \mWspace)
\leq \Cconv \exp\left(-C''_s d^2\right),
\end{align}
where $C''_s := (\log{\rho_b})^2(\log{b})/(2{\Cconvhelp}^2)$.
\end{theorem}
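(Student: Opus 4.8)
The plan is to combine the existence result for digital nets with large minimal weight (Lemma~\ref{lem:exist-min-wt}), the volume bound (Lemma~\ref{lem:vol-conv}, or rather its consequence \eqref{eq:vol-conv2}), and the error bound in terms of the minimal weight (Lemma~\ref{thm:minwt-to-WAFOM-conv}). Concretely, fix a positive integer $d$ and set $|P| = b^d$. We want to find $P$ with $\delta_{P^\perp}$ as large as possible; by Lemma~\ref{lem:exist-min-wt}, such a $P$ exists with $\delta_{P^\perp} \geq M$ provided $\mvol{M} \leq \rho_b^d$ and $l \geq M - a_1 - 1$. So the strategy is to choose $M$ as the largest value for which the volume condition is still guaranteed.

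First I would invoke \eqref{eq:vol-conv2}, which says $\mvol{M} \leq \exp(\Cconvhelp \sqrt{M})$ with $\Cconvhelp = \SumSet + 2\sqrt{(b-1)s}$. Hence $\mvol{M} \leq \rho_b^d = \exp(d \log \rho_b)$ is implied by $\Cconvhelp \sqrt{M} \leq d \log \rho_b$, i.e.\ by $M \leq (d \log \rho_b / \Cconvhelp)^2$. So I would take $M := (d \log \rho_b / \Cconvhelp)^2$; then Lemma~\ref{lem:exist-min-wt} (with $l$ chosen to satisfy $l \geq M - a_1 - 1$, e.g.\ $l = \lceil M - a_1 - 1\rceil$, which is consistent with the precision claimed in the statement since $M - 1 - a_1 \leq (\log\rho_b/\Cconvhelp)^2 d^2 - 1 - a_1$) yields a digital net $P$ over $\Zb$ with precision $l$, $|P| = b^d$, and $\delta_{P^\perp} \geq M$.

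Next I would feed this into Lemma~\ref{thm:minwt-to-WAFOM-conv}, which gives
\[
\worsterr(P, \mWspace) \leq \Cconv \exp(-\delta_{P^\perp}(\log b)/2) \leq \Cconv \exp(-M (\log b)/2).
\]
Substituting $M = (d \log \rho_b / \Cconvhelp)^2$ gives the exponent $-\tfrac{\log b}{2}\cdot \tfrac{(\log \rho_b)^2}{\Cconvhelp^2}\, d^2 = -C''_s d^2$ with $C''_s = (\log\rho_b)^2(\log b)/(2\Cconvhelp^2)$, which is exactly \eqref{eq:conv-theorem}. This is essentially a bookkeeping assembly: all the analytic work has already been done in the earlier lemmas. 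The only mild subtlety is making sure the precision requirement $l \geq M - a_1 - 1$ is compatible with the bound $l \geq (\log\rho_b/\Cconvhelp)^2 d^2 - 1 - a_1$ stated in the theorem — but since $M = (\log\rho_b/\Cconvhelp)^2 d^2$, the two coincide, so there is no obstacle here; one just needs to note that any $l$ at least this large works, and remark (if desired) that taking $l$ exactly equal to the stated bound, rounded up, suffices. I would therefore present the proof as: choose $M$, verify the hypothesis of Lemma~\ref{lem:exist-min-wt} via \eqref{eq:vol-conv2}, apply Lemma~\ref{thm:minwt-to-WAFOM-conv}, and simplify.
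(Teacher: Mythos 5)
Your proposal is correct and matches the paper's own argument: choose $M = (d\log{\rho_b}/\Cconvhelp)^2$, use \eqref{eq:vol-conv2} to verify the hypothesis of Lemma~\ref{lem:exist-min-wt}, and then apply Lemma~\ref{thm:minwt-to-WAFOM-conv} to the resulting net with $\delta_{P^\perp} \geq M$. The bookkeeping on the precision $l$ is handled the same way, so there is nothing to add.
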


In particular, $e(b^d, s, \mWspace)$ is bounded by the right-hand side of \eqref{eq:conv-theorem}.
Therefore we have $\minerrS[\mWspace] \leq \Cconv \exp\left(-C''_s \lfloor\log_b{n}\rfloor^2 \right)$.
Thus embeddings \eqref{eq:S-W-embed} and \eqref{eq:W-mW-embed}
imply the following convergence result.
\begin{corollary}\label{cor:conv-result}
There exist constants $C_{i,s}$ and $C'_{i,s}$ ($i=1,2,3$) which depend on $s$ and the weights $\bsu$ or $\bsa$
such that for all $n$ we have
\begin{align*}
\minerrS[\mWspace] \leq C_{1,s} \exp\left(-C'_{1,s} (\log{n})^2 \right),\\
\minerrS[\Wspace] \leq C_{2,s} \exp\left(-C'_{2,s} (\log{n})^2 \right),\\
\minerrS[\Sspace] \leq C_{3,s} \exp\left(-C'_{3,s} (\log{n})^2 \right).
\end{align*}
\end{corollary}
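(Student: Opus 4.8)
The plan is to read off Corollary~\ref{cor:conv-result} from Theorem~\ref{thm:existence-goodnet-conv}, which has already done the substantive work by exhibiting, for every positive integer $d$, a digital net $P$ over $\Zb$ with $|P|=b^d$ and $\worsterr(P,\mWspace)\le\Cconv\exp(-C''_s d^2)$. First I would note that the minimal worst-case error is non-increasing in the number of nodes, since an $n$-point algorithm may always discard extra evaluations; combined with the net above this gives $e(b^d,s,\mWspace)\le\Cconv\exp(-C''_s d^2)$ and hence $\minerrS[\mWspace]\le\Cconv\exp(-C''_s\lfloor\log_b n\rfloor^2)$ for every $n\ge 1$, where for $n<b$ one simply uses the trivial bound $\minerrS[\mWspace]\le e(0,s,\mWspace)=1$. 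It then remains to replace $\lfloor\log_b n\rfloor^2$ by $(\log n)^2$: for $n\ge b^2$ one has $\lfloor\log_b n\rfloor\ge\tfrac12\log_b n$, so $\lfloor\log_b n\rfloor^2\ge(\log n)^2/(4(\log b)^2)$, while for $1\le n<b^2$ the quantity $(\log n)^2$ is bounded, so the finitely many such $n$ are absorbed into the leading constant. This yields the bound for $\mWspace$ with $C'_{1,s}=C''_s/(4(\log b)^2)$ and a suitable $C_{1,s}$.

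Next I would transfer the bound along the embeddings of Section~\ref{sec:Walsh}. The elementary observation is that a QMC rule $P$ is linear, so whenever $\Hspace_1\subset\Hspace_2$ with $\|f\|_{\Hspace_2}\le\kappa\|f\|_{\Hspace_1}$ one has $\worsterr(P,\Hspace_1)\le\kappa\,\worsterr(P,\Hspace_2)$, and therefore $e(n,s,\Hspace_1)\le\kappa\,e(n,s,\Hspace_2)$. Applying this to the norm equivalence \eqref{eq:W-mW-embed}, with $\kappa=b^{\sum_{j=1}^s\sum_{i\in\NegSet{j}}(1-(i+a_j))}$, gives the second inequality of the corollary. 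For the third, I would apply Theorem~\ref{thm:existence-goodnet-conv} to the weight sequence $\bsu'=(-\log_b(\CWalcoeff\mb^{-1}u_j))_{j\ge 1}$ — which satisfies \eqref{eq:weight-increasing} precisely because $\bsu$ is decreasing — obtaining the bound for $\mWspace[\bsu']$, then pass to $\Wspace[\bsu']$ via \eqref{eq:W-mW-embed} and finally to $\Sspace$ via \eqref{eq:S-W-embed}, whose embedding constant is $1$. All constants produced this way are finite for each fixed $s$ and depend only on $s$ and the weights, exactly as the statement allows.

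I do not expect a genuine obstacle: the heavy estimates are Theorem~\ref{thm:existence-goodnet-conv} and the embeddings, both already in hand. The points needing care are purely bookkeeping ones — getting the direction of the embedding inequalities right (the unit ball of the smaller-norm space lies inside a $\kappa$-dilate of the unit ball of the larger-norm space, and linearity of QMC rules lets the worst-case error scale by $\kappa$), the harmless small-$n$ adjustment that turns $\lfloor\log_b n\rfloor^2$ into $(\log n)^2$, and checking that $\bsu'$ inherits monotonicity so that Theorem~\ref{thm:existence-goodnet-conv} applies verbatim.
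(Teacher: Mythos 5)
Your proposal is correct and follows essentially the same route as the paper: the paper likewise reads off $e(b^d,s,\mWspace)\le \Cconv\exp(-C''_s d^2)$ from Theorem~\ref{thm:existence-goodnet-conv}, passes to general $n$ via $\lfloor\log_b n\rfloor$, and transfers the bound to $\Wspace$ and $\Sspace$ through the embeddings \eqref{eq:W-mW-embed} and \eqref{eq:S-W-embed}. Your explicit treatment of the floor-to-$(\log n)^2$ conversion, the monotonicity in $n$, and the application to the shifted weights $\bsu'$ only spells out bookkeeping the paper leaves implicit.
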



\subsection{Tractability results}\label{sec-upper-trac}
We have proved super-polynomial convergence for the function spaces, 
but this convergence depends heavily on $s$.
In this subsection, we prove a tractability result
under the assumption 
of the sufficient condition from Theorem~\ref{thm:main}.
That is, let $r > 0$ and 
assume that
the sequence $\bsa$ satisfies $\liminf_{j \to \infty} a_j/j^r > 0$.
This implies that there exist a positive real number $a$ and a non-negative integer $A$
such that
\begin{equation}\label{eq:weight-assumption}
a_j \geq aj^r \qquad \text{for all} \quad j > A.
\end{equation}
Hence hereafter we assume \eqref{eq:weight-assumption}.
Under this assumption,
$\NegSet{j} = \{i \in \nat \mid i + a_j \leq 1\}$ is empty for sufficiently large $j$.
Hence the constant factor in \eqref{eq:W-mW-embed} is independent of $s$ and thus
the tractability result for $\Wspace$ follows from that for $\mWspace$.
We also note that $(b-1)\sum_{j=1}^\infty n_j$ is finite and we denote it by
$\SumSettrac$.
We also show Item~\ref{item:maincor-ACST} of Theorem~\ref{thm:main-Sspace}
using the embedding \eqref{eq:S-W-embed}.
The following arguments are parallel to those in Section~\ref{sec-upper-conv}.

First we prove a bound on $\mvol{M}$ under the assumption \eqref{eq:weight-assumption}.
We need the following lemma to bound $\sum_{j=1}^s X^{a_j}$. 
\begin{lemma}\label{lem:sum of power of X}
For all $0 < X < 1$, we have
\[
\sum_{j=1}^s X^{aj^r} \leq  r^{-1}\Gamma(1/r) (a\log{X^{-1}})^{-1/r},
\]
where $\Gamma(z) := \int_0^\infty t^{z-1}\exp(-t) \, dt$ is the Gamma function.
\end{lemma}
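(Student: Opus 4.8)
The plan is to reduce the sum to a standard Gamma integral by an integral comparison. First I would rewrite each term as $X^{aj^r} = \exp(-a j^r \log X^{-1})$ and abbreviate $c := a\log X^{-1}$, which is a positive real number since $0<X<1$ and $a>0$. Then the claimed bound is exactly
\[
\sum_{j=1}^s \exp(-c j^r) \leq \frac{\Gamma(1/r)}{r\, c^{1/r}}.
\]
The key observation is that the function $\varphi(t) := \exp(-c t^r)$ is non-negative and monotonically decreasing on $[0,\infty)$, because $r>0$ makes $t\mapsto t^r$ increasing there. Hence $\varphi(j) \leq \int_{j-1}^{j}\varphi(t)\,dt$ for every $j\in\nat$, and summing over $j=1,\dots,s$ (and then enlarging to $j=1,\dots,\infty$) gives
\[
\sum_{j=1}^s \exp(-c j^r) \leq \sum_{j=1}^\infty \int_{j-1}^{j} \exp(-c t^r)\,dt = \int_0^\infty \exp(-c t^r)\,dt.
\]

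Next I would evaluate the integral by the substitution $u = c t^r$, so that $t = (u/c)^{1/r}$ and $dt = \tfrac{1}{r c^{1/r}}\, u^{1/r-1}\,du$, yielding
\[
\int_0^\infty \exp(-c t^r)\,dt = \frac{1}{r\, c^{1/r}}\int_0^\infty u^{1/r-1}\exp(-u)\,du = \frac{\Gamma(1/r)}{r\, c^{1/r}}.
\]
Substituting back $c = a\log X^{-1}$ gives the asserted inequality $\sum_{j=1}^s X^{aj^r} \leq r^{-1}\Gamma(1/r)(a\log X^{-1})^{-1/r}$.

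There is no real obstacle here; the only point requiring a line of care is the justification of the integral comparison, i.e. that $\varphi$ is decreasing on all of $[0,\infty)$ (not merely for large $t$), which is immediate once $r>0$, and the convergence of the improper integral, which is clear since $\varphi$ decays faster than any power. I would state these explicitly but not belabor them.
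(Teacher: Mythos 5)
Your proposal is correct and follows essentially the same route as the paper: both bound the sum by $\int_0^\infty \exp(-a t^r \log X^{-1})\,dt$ via monotonicity of $t \mapsto X^{at^r}$ and then evaluate that integral with the substitution $z = a t^r \log X^{-1}$ to obtain $r^{-1}\Gamma(1/r)(a\log X^{-1})^{-1/r}$. The only cosmetic difference is that you compare term-by-term over the intervals $[j-1,j]$, whereas the paper writes the comparison directly as $\sum_{j=1}^s \leq \int_0^s$; these are the same argument.
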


\begin{proof}
Since $X^{ax^r}$ is a monotonically decreasing function of $x$, we have
\[
\sum_{j=1}^s X^{aj^r} \leq \int_0^s X^{ax^r} \, dx \leq \int_0^\infty \exp(-ax^r\log{X^{-1}}) \, dx.
\]
Now we consider the substitution $ax^r \log{X^{-1}} = z$. Then we have
$dx = r^{-1} (a\log{X^{-1}})^{-1/r} z^{(1-r)/r} \, dz$,
and thus
\begin{align*}
\int_0^\infty \exp(-ax^r \log{X^{-1}}) \, dx
&= r^{-1}(a\log{X^{-1}})^{-1/r} \int_0^\infty z^{(1-r)/r}\exp(-z) \, dz\\
&= r^{-1}\Gamma(1/r) (a\log{X^{-1}})^{-1/r},
\end{align*}
which proves the lemma.
\end{proof}

Combining \eqref{eq:vol-secondstep} and Lemma~\ref{lem:sum of power of X},
for all $X \in (0,1)$ we have
\begin{align*}
\log \mvol{M}
&\leq (b-1)\left(\sum_{j=1}^{A}\left(n_j + \frac{1}{\log{X^{-1}}}\right)  + \sum_{j=A+1}^s \frac{X^{ar^j}}{\log{X^{-1}}} \right) + M \log{X^{-1}} \\
&\leq (b-1)\left(\frac{A}{\log{X^{-1}}} + \frac{r^{-1} \Gamma(1/r)a^{-1/r}}{(\log{X^{-1})^{1+1/r}}} \right) + \SumSettrac + M \log{X^{-1}}.
\end{align*}
Putting $X = 1/\exp(M^{-r/(2r+1)})$ and using $M \geq 1$, we obtain
\begin{align*}
\log \mvol{M}
&\leq \Ctracvol M^{(r+1)/(2r+1)},
\end{align*}
where
$\Ctracvol
= (b-1)(A + r^{-1} \Gamma(1/r)a^{-1/r}) + \SumSettrac + 1$.
We have thus proved the following lemma.
\begin{lemma}\label{lem:vol}
Assume \eqref{eq:weight-assumption}. Then for all $M \geq 0$ we have
\[
\mvol{M} \leq \exp\left(\Ctracvol M^{(r+1)/(2r+1)} \right).
\]
\end{lemma}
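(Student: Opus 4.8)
The plan is to start from the inequality \eqref{eq:vol-secondstep} already established above,
\[
\log \mvol{M} \le (b-1)\sum_{j=1}^s \left(n_j + (\log X^{-1})^{-1}\min(X^{a_j},1)\right) + M\log X^{-1},
\]
which holds for every $X\in(0,1)$, and then to choose $X$ as a function of $M$ so that the two genuinely $M$-dependent contributions balance. The trivial range is disposed of first: for $0\le M<1$ one has $\mvol{M}=1$, which is already below $\exp(\Ctracvol M^{(r+1)/(2r+1)})$ since $\Ctracvol>0$; so I may assume $M\ge 1$.

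Next I would split the sum over $j$ at the index $A$ supplied by \eqref{eq:weight-assumption}. For $j\le A$ I just bound $\min(X^{a_j},1)\le 1$ and use that $\sum_{j\ge 1}n_j$ is finite and equals $\SumSettrac/(b-1)$ (indeed $n_j=0$ as soon as $a_j>0$, in particular for $j>A$); this part contributes at most $\SumSettrac + (b-1)A(\log X^{-1})^{-1}$. For $j>A$ I use $a_j\ge aj^r$ together with $0<X<1$ to get $\min(X^{a_j},1)=X^{a_j}\le X^{aj^r}$, and then invoke Lemma~\ref{lem:sum of power of X} to bound $\sum_{j>A}X^{aj^r}\le r^{-1}\Gamma(1/r)(a\log X^{-1})^{-1/r}$. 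Writing $t:=\log X^{-1}$, everything assembles into
\[
\log \mvol{M} \le \SumSettrac + (b-1)A\,t^{-1} + (b-1)r^{-1}\Gamma(1/r)a^{-1/r}\,t^{-1-1/r} + M\,t.
\]

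The decisive choice is $t = M^{-r/(2r+1)}$, i.e. $X=\exp(-M^{-r/(2r+1)})$, obtained by balancing the last term $Mt$ against $t^{-1-1/r}$ (solving $M=t^{-(2r+1)/r}$). A short exponent computation then gives $Mt = t^{-1-1/r} = M^{(r+1)/(2r+1)}$, while $t^{-1}=M^{r/(2r+1)}$ is a strictly smaller power of $M$. Finally, since $M\ge 1$ one has $M^{r/(2r+1)}\le M^{(r+1)/(2r+1)}$ and $\SumSettrac\le \SumSettrac M^{(r+1)/(2r+1)}$, so all four terms collapse into a single constant multiple of $M^{(r+1)/(2r+1)}$, namely $\Ctracvol M^{(r+1)/(2r+1)}$ with $\Ctracvol = (b-1)(A + r^{-1}\Gamma(1/r)a^{-1/r}) + \SumSettrac + 1$. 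This is a routine free-parameter optimization in the spirit of \cite{Matouvsek1998itd, Matsumoto2013eoh, SuzukiMW}, and I do not anticipate any genuine obstacle; the only delicate point is the bookkeeping of the exponents and the correct absorption of the sub-dominant $j\le A$ terms and of the additive constant using $M\ge 1$.
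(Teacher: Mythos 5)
Your proposal is correct and follows essentially the same route as the paper: start from \eqref{eq:vol-secondstep}, split the sum at the index $A$ from \eqref{eq:weight-assumption}, bound the tail via Lemma~\ref{lem:sum of power of X}, and choose $\log X^{-1}=M^{-r/(2r+1)}$, absorbing the sub-dominant terms with $M\geq 1$ to land on exactly the constant $\Ctracvol$. The exponent bookkeeping and the treatment of the trivial range $M<1$ match the paper's argument.
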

Note that the bound on $\mvol{M}$ from this lemma is weaker than
Lemma~\ref{lem:vol-conv}
with respect to $M$ but independent of $s$ instead.


In the following, we bound
the right-hand side of \eqref{eq:minDwt-bounds-err}
along the lines of Section~\ref{sec-upper-conv}.
For $M \geq 0$ we have
\begin{align}
\sum_{\substack{\bsk \in \natzero^s \\ \mDwt{\bsk} \geq M}} b^{-\mDwt{\bsk}}
\leq \sum_{i=0}^\infty \exp(\Ctracvol (M+i+1)^{(r+1)/(2r+1)}) b^{-(M+i)} \label{eq:trac-bd}.
\end{align}
We can easily check the inequality
\[
x^{(r+1)/(2r+1)}
\leq \frac{r+1}{2r+1}B^{-r} x + \frac{r}{2r+1}B^{r+1}
\qquad \text{for all $x, B \geq 0$}.
\]
Applying this inequality with $x = M + i + 1$, the right-hand side of \eqref{eq:trac-bd}
is bounded by
\begin{align*}
& \sum_{i=0}^\infty \exp \left(\Ctracvol\frac{r+1}{2r+1}B^{-r}(M+i+1) + \Ctracvol\frac{r}{2r+1}B^{r+1}\right) b^{-(M+i)}\\
&= b\exp \left(\Ctracvol \frac{r}{2r+1}B^{r+1}\right)
\sum_{i=0}^\infty \exp \left(\left(\Ctracvol\frac{r+1}{2r+1}B^{-r} - \log{b}\right)(M+i+1)\right).
\end{align*}
Now we choose $B$ such that 
\[
\Ctracvol\frac{r+1}{2r+1}B^{-r} = \frac{\log{b}}{2}.
\]
Thus we have a bound on the right-hand side of \eqref{eq:minDwt-bounds-err} as
\begin{align*}
\sum_{\substack{\bsk \in \natzero^s \\ \mDwt{\bsk} \geq M}} b^{-\mDwt{\bsk}}
\leq \Ctracbd \exp(-M(\log{b})/2),
\end{align*}
where the positive constant $\Ctracbd$ is defined as
\begin{align*}
\Ctracbd
=\exp \left(\frac{\log{b}}{2} + \frac{\Ctracvol r}{2r+1}\left(\frac{2\Ctracvol (r+1)}{(2r+1)\log{b}}\right)^{(r+1)/r}\right) \frac{1}{1 - \exp(-(\log{b})/2)}.
\end{align*}
Hence Lemma~\ref{lem:werr-of-nets} implies the following lemma.
\begin{lemma}\label{thm:minwt-to-WAFOM-trac}
Assume \eqref{eq:weight-assumption}.
If $P$ is a digital net, we have
\begin{equation*}
\worsterr(P, \mWspace)
\leq \Ctracbd \exp(-\delta_{P^\perp}(\log{b})/2).
\end{equation*}
\end{lemma}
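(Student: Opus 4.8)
The plan is to read the bound straight off Lemma~\ref{lem:werr-of-nets}, which gives $\worsterr(P,\mWspace)\le\sum_{\bsk\in\natzero^s,\ \mDwt{\bsk}\ge\delta_{P^\perp}}b^{-\mDwt{\bsk}}$, so that it suffices to prove the $s$-independent tail estimate
\[
\sum_{\substack{\bsk\in\natzero^s\\ \mDwt{\bsk}\ge M}}b^{-\mDwt{\bsk}}\le\Ctracbd\exp\!\big(-M(\log b)/2\big)\qquad\text{for all }M\ge 0
\]
and then specialize to $M=\delta_{P^\perp}$. The computation preceding the statement already carries this out; below I describe how I would organize it.

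First I would split the index set $\{\bsk:\mDwt{\bsk}\ge M\}$ into the layers $M+i\le\mDwt{\bsk}<M+i+1$ for $i=0,1,2,\dots$; on layer $i$ one has $b^{-\mDwt{\bsk}}\le b^{-(M+i)}$ and at most $\mvol{M+i+1}$ terms, so the tail is at most $\sum_{i=0}^\infty\mvol{M+i+1}\,b^{-(M+i)}$. Inserting Lemma~\ref{lem:vol} --- which is where assumption \eqref{eq:weight-assumption} enters, making $\Ctracvol$ a finite constant independent of $s$ via $\SumSettrac$ and Lemma~\ref{lem:sum of power of X} --- turns this into $\sum_{i=0}^\infty\exp\!\big(\Ctracvol(M+i+1)^{(r+1)/(2r+1)}\big)b^{-(M+i)}$, i.e. the right-hand side of \eqref{eq:trac-bd}. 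The key step is to linearize the sublinear exponent: the power $(r+1)/(2r+1)$ lies in $(1/2,1)$ and is concave, so a tangent-line bound gives $x^{(r+1)/(2r+1)}\le\frac{r+1}{2r+1}B^{-r}x+\frac{r}{2r+1}B^{r+1}$ for all $x,B\ge0$, which applied with $x=M+i+1$ reduces each summand to geometric form with ratio $\exp\!\big(\Ctracvol\frac{r+1}{2r+1}B^{-r}-\log b\big)$.

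The decisive choice is to fix the free parameter by $\Ctracvol\frac{r+1}{2r+1}B^{-r}=(\log b)/2$, so that the ratio equals $\exp(-(\log b)/2)<1$; the geometric series then sums to $(1-\exp(-(\log b)/2))^{-1}$, and collecting the leading factor $b\exp\!\big(\Ctracvol\frac{r}{2r+1}B^{r+1}\big)$ with $B^{r+1}=\big(2\Ctracvol(r+1)/((2r+1)\log b)\big)^{(r+1)/r}$, together with $b\exp(-(\log b)/2)=\exp((\log b)/2)$, produces exactly the constant $\Ctracbd$ of the statement and the factor $\exp(-M(\log b)/2)$. There is no genuine obstacle here: this is the verbatim tractability analogue of the argument preceding Lemma~\ref{thm:minwt-to-WAFOM-conv}, with the $\sqrt{M}$-type exponent of Lemma~\ref{lem:vol-conv} replaced by the $M^{(r+1)/(2r+1)}$-type exponent of Lemma~\ref{lem:vol}; the only points needing care are stating the concavity inequality for the exponent $(r+1)/(2r+1)$ rather than $1/2$, and keeping the $s$-independence of $\Ctracvol$ visible throughout, which is the entire purpose of invoking \eqref{eq:weight-assumption}.
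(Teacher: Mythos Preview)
Your proposal is correct and follows the paper's argument essentially line for line: start from Lemma~\ref{lem:werr-of-nets}, layer the tail sum, plug in Lemma~\ref{lem:vol}, linearize the sublinear exponent via the tangent inequality, and choose $B$ so that the geometric ratio equals $\exp(-(\log b)/2)$. The only cosmetic addition is your remark that $(r+1)/(2r+1)\in(1/2,1)$ justifies the concavity inequality, which the paper leaves implicit.
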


Now we prove the existence of good digital nets.
By Lemma~\ref{lem:vol}, the condition of Lemma~\ref{lem:exist-min-wt} is satisfied if 
$\exp(\Ctracvol M^{(r+1)/(2r+1)}) \leq {\rho_b}^d$,
which is equivalent to $M \leq (d\log{\rho_b}/\Ctracvol)^{(2r+1)/(r+1)}$.
Therefore we have
the following bound on the worst-case error independent of $s$.
\begin{theorem}\label{thm:existence-goodnet-trac}
Let $d \in \bN$ and put
$\Ctrachelp = ((\log{\rho_b})/\Ctracvol)^{(2r+1)/(r+1)}$.
Assume \eqref{eq:weight-assumption}.
Then there exists a digital net $P$ over $\Zb$ with precision $l$
with $|P| = b^d$ and $l \geq \Ctrachelp d^{(2r+1)/(r+1)}-1-a_1$ such that
\begin{align}\label{eq:trac-theorem}
\worsterr(P, \mWspace)
\leq \Ctracbd \exp\left(-\frac{\Ctrachelp \log{b}}{2} d^{(2r+1)/(r+1)}\right).
\end{align}
\end{theorem}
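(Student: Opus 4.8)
The plan is to combine the three ingredients already assembled in this subsection, exactly as in the proof of Theorem~\ref{thm:existence-goodnet-conv}, but with the $s$-independent volume bound of Lemma~\ref{lem:vol} in place of Lemma~\ref{lem:vol-conv}. First I would fix the threshold $M := \Ctrachelp d^{(2r+1)/(r+1)}$, that is, $M = (d\log{{\rho_b}}/\Ctracvol)^{(2r+1)/(r+1)}$. A one-line exponent computation then gives $\Ctracvol M^{(r+1)/(2r+1)} = d\log{{\rho_b}}$, so Lemma~\ref{lem:vol} yields $\mvol{M} \leq \exp(\Ctracvol M^{(r+1)/(2r+1)}) = \exp(d\log{{\rho_b}}) = {\rho_b}^d$. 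This is precisely the largest $M$ for which the discussion preceding the theorem shows the counting hypothesis of Lemma~\ref{lem:exist-min-wt} to hold.

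Next I would invoke Lemma~\ref{lem:exist-min-wt} with this $M$ and any integer $l \geq M - a_1 - 1 = \Ctrachelp d^{(2r+1)/(r+1)} - 1 - a_1$; such $l$ exists, and this is exactly the precision condition asserted in the statement. Since $\mvol{M} \leq {\rho_b}^d$ has just been verified, Lemma~\ref{lem:exist-min-wt} produces a digital net $P$ over $\Zb$ with precision $l$ satisfying $|P| = b^d$ and $\delta_{P^\perp} \geq M$.

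Finally I would feed this net $P$ into Lemma~\ref{thm:minwt-to-WAFOM-trac}, which under the standing assumption \eqref{eq:weight-assumption} bounds $\worsterr(P, \mWspace) \leq \Ctracbd \exp(-\delta_{P^\perp}(\log{b})/2)$. Combining this with $\delta_{P^\perp} \geq M$ and substituting the value of $M$ gives $\worsterr(P, \mWspace) \leq \Ctracbd \exp(-M(\log{b})/2) = \Ctracbd \exp(-\frac{\Ctrachelp \log{b}}{2} d^{(2r+1)/(r+1)})$, which is \eqref{eq:trac-theorem}. All of the analytic content here is already contained in Lemmas~\ref{lem:vol}, \ref{lem:exist-min-wt} and \ref{thm:minwt-to-WAFOM-trac}, so what remains is essentially bookkeeping. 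The step I expect to need the most care is the choice of $M$: one must check that the exponent $\Ctracvol M^{(r+1)/(2r+1)}$ in Lemma~\ref{lem:vol} collapses to exactly $d\log{{\rho_b}}$, so that the hypothesis $\mvol{M} \leq {\rho_b}^d$ of Lemma~\ref{lem:exist-min-wt} is met, and simultaneously that the induced lower bound on $l$ matches the one in the statement; everything else then follows by direct substitution.
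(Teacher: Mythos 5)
Your proposal is correct and follows exactly the paper's route: take the maximal admissible threshold $M=\Ctrachelp d^{(2r+1)/(r+1)}$ so that Lemma~\ref{lem:vol} gives $\mvol{M}\leq{\rho_b}^d$, invoke Lemma~\ref{lem:exist-min-wt} with $l\geq M-1-a_1$, and conclude via Lemma~\ref{thm:minwt-to-WAFOM-trac} with $\delta_{P^\perp}\geq M$. The bookkeeping, including the collapse of $\Ctracvol M^{(r+1)/(2r+1)}$ to $d\log{\rho_b}$, matches the paper's argument.
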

In particular, $e(b^d, s)$ is bounded by the right-hand side of \eqref{eq:trac-theorem}.
Therefore we have
$\minerrS[\mWspace] \leq \Ctracbd \exp\left(-c_4 \lfloor\log_b{n}\rfloor^{(2r+1)/(r+1)} \right)$,
where $c_4 := \Ctrachelp (\log{b})/2$.
Thus embeddings \eqref{eq:S-W-embed} and \eqref{eq:W-mW-embed}
imply the following tractability result.
\begin{corollary}\label{cor:trac-result}
Assume \eqref{eq:weight-assumption}.
Then there exist constants $C_{i}$ and $C'_{i}$ ($i=1,2$) which are independent of $s$
such that for all $s$ and $n$ we have
\begin{align*}
\minerrS[\mWspace] \leq C_{1} \exp\left(-C'_{1} (\log{n})^{(2r+1)/(r+1)} \right),\\
\minerrS[\Wspace] \leq C_{2} \exp\left(-C'_{2} (\log{n})^{(2r+1)/(r+1)} \right).
\end{align*}
Assume that the weight sequence $\bsu$ satisfies $\liminf_{j \to \infty} \log(u_j^{-1})/j^{r} > 0$.
Then there exists constants $C_3, C'_3$ independent of $s$ such that for all $s$ and $n$ we have
\[
\minerrS[\Sspace] \leq C_{3} \exp\left(-C'_{3} (\log{n})^{(2r+1)/(r+1)} \right).
\]
\end{corollary}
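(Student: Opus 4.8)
The plan is to read the corollary off the explicit existence result Theorem~\ref{thm:existence-goodnet-trac}, combined with the two embeddings \eqref{eq:W-mW-embed} and \eqref{eq:S-W-embed}, while checking at every step that the constants produced do not depend on $s$ --- this uniformity is the whole content of the statement, and it is exactly what hypothesis \eqref{eq:weight-assumption} is designed to supply.

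First I would handle $\mWspace$. Theorem~\ref{thm:existence-goodnet-trac} produces, for each $d\in\nat$, a digital net with $b^d$ points whose worst-case error on $\mWspace$ is at most $\Ctracbd\exp(-c_4 d^{(2r+1)/(r+1)})$ with $c_4=\Ctrachelp(\log b)/2$; since such a digital net is in particular a QMC algorithm using $b^d$ function values, this gives $e(b^d,s,\mWspace)\le \Ctracbd\exp(-c_4 d^{(2r+1)/(r+1)})$, and both $\Ctracbd$ and $c_4$ are $s$-free because $\Ctracvol$ in Lemma~\ref{lem:vol} is built only from $b,r,a,A$ and the finite sum $\SumSettrac$. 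For arbitrary $n$ I would set $d=\lfloor\log_b n\rfloor$, so that $b^d\le n$, and use that $e(\cdot,s,\mWspace)$ is nonincreasing in the number of nodes (discard superfluous evaluations) to get $e(n,s,\mWspace)\le e(b^d,s,\mWspace)$. Finally I would replace $\lfloor\log_b n\rfloor$ by a fixed multiple of $\log n$: for $n\ge b$ one has $\lfloor\log_b n\rfloor\ge(\log_b n)/2=(\log n)/(2\log b)$, which yields $\minerrS[\mWspace]\le C_1\exp(-C_1'(\log n)^{(2r+1)/(r+1)})$ with $C_1'=c_4/(2\log b)^{(2r+1)/(r+1)}$; the finitely many $n<b$ (i.e. $d=0$) are absorbed into $C_1$ using $e(0,s,\mWspace)\le1$, and nothing here depends on $s$.

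Next come the bounds for $\Wspace$ and $\Sspace$ via the embeddings. By \eqref{eq:W-mW-embed} we have $\mWnorm{f}\le b^{\sum_j\sum_{i\in\NegSet{j}}(1-(i+a_j))}\Wnorm{f}$, so the unit ball of $\Wspace$ lies inside a fixed multiple of the unit ball of $\mWspace$ and hence $e(n,s,\Wspace)$ is that multiple times $e(n,s,\mWspace)$; under \eqref{eq:weight-assumption} we have $\NegSet{j}=\emptyset$ for all $j>A$, so the exponent is a finite sum bounded independently of $s$, and the previous paragraph gives the $\Wspace$ bound with $s$-free $C_2,C_2'$. For $\Sspace$ I would invoke \eqref{eq:S-W-embed}, namely $\Sspace\subset\Wspace[\bsu']$ with $\Wnorm[\bsu']{f}\le\Snorm{f}$ and $\bsu'=(-\log_b(\CWalcoeff\mb^{-1}u_j))_{j\ge1}$, which yields $e(n,s,\Sspace)\le e(n,s,\Wspace[\bsu'])$; it then remains to check that $\bsu'$ meets the hypotheses used above --- it is nondecreasing because $\bsu$ is nonincreasing, and $a'_j/j^r=(\log_b(u_j^{-1})-\log_b(\CWalcoeff\mb^{-1}))/j^r$ has limit inferior equal to $(\liminf_j\log(u_j^{-1})/j^r)/\log b>0$ by assumption, so $\bsu'$ satisfies \eqref{eq:weight-assumption} for suitable $a,A$ with the same $r$, and applying the $\Wspace$-estimate to the weight sequence $\bsu'$ gives the claimed bound with $s$-free $C_3,C_3'$.

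The argument is essentially bookkeeping on top of Theorem~\ref{thm:existence-goodnet-trac}, so the only point demanding genuine care is the $s$-uniformity, and I would make sure to isolate where it is used: the finiteness and $s$-independence of $\SumSettrac=(b-1)\sum_{j=1}^\infty n_j$ entering $\Ctracvol$, the finiteness of the exponent $\sum_j\sum_{i\in\NegSet{j}}(1-(i+a_j))$ in \eqref{eq:W-mW-embed}, and the fact that the passage from $\bsu$ to $\bsu'$ preserves the decay condition --- each of which is forced by \eqref{eq:weight-assumption} precisely because it makes $\NegSet{j}$ empty for all large $j$.
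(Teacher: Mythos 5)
Your proposal is correct and follows the paper's own route: deduce the $\mWspace$ bound from Theorem~\ref{thm:existence-goodnet-trac} via $d=\lfloor\log_b n\rfloor$ and monotonicity of $\minerrS[\mWspace]$ in $n$, then transfer it to $\Wspace$ and $\Sspace$ through the embeddings \eqref{eq:W-mW-embed} and \eqref{eq:S-W-embed}, with \eqref{eq:weight-assumption} guaranteeing $s$-independence of all constants. The only difference is that you spell out the bookkeeping (absorbing small $n$, replacing $\lfloor\log_b n\rfloor$ by a multiple of $\log n$, and verifying that $\bsu'$ inherits the decay condition with the same $r$) which the paper leaves implicit.
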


\section*{Acknowledgments}
This work was supported by the Program for Leading Graduate Schools, MEXT, Japan.
The author is grateful to Josef Dick for many helpful discussions and comments and a suggestion to focus on $\Sspace$ rather than the Walsh space.
The author would also like to thank Takashi Goda and Takehito Yoshiki
for valuable discussions and comments.

\bibliography{suzuki.bib}

\end{document}